\DeclareFontFamily{OT1}{pzc}{}
\DeclareFontShape{OT1}{pzc}{m}{it}{<-> s * [1.15] pzcmi7t}{}
\DeclareMathAlphabet{\mathpzc}{OT1}{pzc}{m}{it}
\newtheorem{theorem}{Theorem}
\newtheorem{deff}{Definition}
\newtheorem{proposition}{Proposition}
\newtheorem{lemma}{Lemma}
\newtheorem{rem}{Remark}
\newcommand{\npa}{\addtocounter{num}{1} \noindent
{\bf \arabic{section}.\arabic{num}}\;\;}
\newcommand{\proof}{{\bf Proof.}~}
\newcommand{\mto}{\mapsto}
\newcommand{\bqa}{\begin{eqnarray}}
\newcommand\eqa {\end{eqnarray}}
\newcommand{\beq}{\begin{eqnarray}}
\newcommand{\beqn}{\begin{eqnarray}\nonumber}
\newcommand{\eeq}{\end{eqnarray}}
\newcommand{\be}{\begin{array}}
\newcommand{\ee}{\end{array}}
 \newcommand{\pr}{\partial}
 \newcommand{\pt}{\partial}
 \newcommand\na {\nabla}
   \newcommand\vf\varphi
 \newcommand{\Hom}{\mathrm{Hom}}
 \newcommand{\End}{\mathrm{End}}
 \newcommand{\Id}{\mathrm{Id}}
 \newcommand{\uHom}{\underline{\mathrm{Hom}}}
 \newcommand{\rk}{\mathrm{rk}}
 \newcommand{\md}{\mathrm{d}}
 \newcommand{\cV}{{\cal V}}
 \newcommand{\cA}{{\cal A}}
 \newcommand{\cM}{{\cal M}}
 \newcommand{\cO}{{\cal O}}
 \newcommand{\cT}{{\cal T}}
 \newcommand{\cJ}{\mathcal{J}}
 \newcommand{\cN}{{\cal N}}
 \newcommand{\R}{{\mathbb R}}
 \newcommand{\Z}{{\mathbb Z}}
\begin{document}
\author{G. Bonavolont\`a\thanks{G. Bonavolont\`a thanks the Luxembourgian National Research Fund for support via AFR grant PhD-09-072.}, A. Kotov}

\title{{\bf On the space of super maps between smooth super manifolds}}

  \def\sp{\mathfrak sp}
  \def\sll{\mathfrak sl}
  \def\g{{\mathfrak g}}
  \def\gl{{\mathfrak gl}}
  \def\su{{\mathfrak su}}
  \def\so{{\mathfrak so}}
  \def\sll{{\mathfrak sl}}
  \def\h{{\mathfrak h}}

  \def\P{{\mathbb P}}
  \def\H{\mathbb H}

   \def\a{\alpha}
   \def\b{\beta}
   \def\t{\theta}
   \def\la{\lambda}
   \def\e{\epsilon}
   \def\ga{\gamma}
   \def\de{\delta}
   \def\De{\Delta}
   \def\om{\omega}
   \def\Om{\Omega}

   \def\i{\imath}

 \def\gr{\g^{\scriptscriptstyle\mathrm{gr}}}
 \def\godd{\g_{\scriptscriptstyle 1}}
 \def\geven{\g_{\scriptscriptstyle 0}}
 \def\grodd{\gr_{\scriptscriptstyle 1}}
 \def\greven{\gr_{\scriptscriptstyle 0}}

  \def\sst{\scriptscriptstyle}

  \def\sot{{\;{\scriptstyle \otimes}\;}}
  \def\st{{\sst\times}}

  \def\df{{\sst\mathrm{def}}}

\def\scrE{\mathscr{E}}
  \def\scrF{\mathscr{F}}
  \def\scrJ{\mathscr{J}}
  \def\scrR{\mathscr{R}}

\date{}
\maketitle

\begin{abstract}
{Mapping spaces of supermanifolds are usually thought as exclusively in functorial terms (i.e. trough the Grothendieck functor of points). In this work we
provide a geometric description of such mapping spaces in terms of infinite-dimensional super-vector bundles.}

\medskip
{\it Keywords }: smooth super-manifolds, mapping space of smooth super-manifolds, jets spaces, global analysis, manifolds of smooth mapping space.
\end{abstract}
\vspace{5mm}


 \section{Introduction}
Let ${\bf SVect}_{\mathbb{R}}$ be the category of super-vector spaces over $\mathbb{R}$ with the standard structure of tensor category and braiding corresponding to the Koszul sign rule convention (see $\cite{DM99}$). The linear algebra in ${\bf SVect}_{\mathbb{R}}$ has two notions of linear operations: categorical hom, $\Hom_{{\bf SVect}_{\mathbb{R}}}(V,W)$ is an ordinary vector space for each couple of super-vector spaces $V,W\in {\bf SVect}_{\mathbb{R}} $ and it consists of all $\mathbb{R}$-linear operators preserving the parity; the inner hom, $\uHom(V,W)$, defined as the adjoint functor to the tensor bi-functor, which is a super-vector space consisting of all $\mathbb{R}$-linear operators.
The setting change drastically passing to the category of finite dimensional smooth supermanifold $\bf{SMan}$. In notation of $\cite{DM99}$ each element $\mathbf{M} \in {\bf SMan}$ will be represented by the corresponding locally ringed space $(M,\cO_{M})$. This category has a natural monoidal category structure, with Cartesian product of two supemanifolds $\mathbf{M}=(M,\cO_{M}),\mathbf{N}=(N,\cO_{N})$ given by the co-product of the corresponding locally ringed space (see appendix). Analogously to ${\bf SVect}_{\mathbb{R}}$, there are two notions of mapping space in ${\bf SMan}$: the categorical hom, i.e. morphisms of locally ringed spaces
, still denoted as $\Hom(\mathbf{M},\mathbf{N})$ and the inner hom, $\uHom(\mathbf{M},\mathbf{N})$ (see appendix, \cite{DM99}). However one treats such spaces only as functors, i.e. trough the Grothendieck functor of points, without spelling out the structure of infinite-dimensional supermanifolds.
 Many remarkable attempts to define a category of infinite dimensional supermanifolds have been performed: $\cite{A. Alldridge},\cite{Mol}$-$\cite{Sa-Wo}$.
 We intend to suggest a different approach to the same problem, that up to our opinion has the advantage of being less abstract and very close to the coordinate view point used in physics for ``super-fields''.
Our aim is to describe both mapping space as particular infinite dimensional super-vector bundle over the infinite dimensional smooth manifold mapping space $C^{\infty}(M,N)$, given by the set of all smooth mappings from $M$ to $N$ according to $\cite{Mic}, \cite{Mic1}$. In paragraph $\S \ref{first-paragraph}$ the starting point is the classical result known as Batchelor theorem, which affirms that each smooth supermanifold is non-canonically isomorphic to a vector bundle over a classical manifold with an odd fiber.
Then we shall identify any morphism of smooth supermanifolds with a certain bundle map and finally, by the use of an auxiliary connection, with a morphism of vector bundles. This proof consists of many steps and some preliminary work on classical jets of a smooth manifold $M$ is needed. In paragraph $\S \ref{second-paragraph}$ the framework change: we sketchy review the theory of infinite dimensional smooth manifold following P. Michor ($\cite{Mic}, \cite{Mic1}$). In fact it is necessary to recall some general results in the setting of global analysis in order to describe the infinite dimensional super vector bundle structure of $\Hom(\mathbf{M},\mathbf{N})$ and $\uHom(\mathbf{M},\mathbf{N})$.  Standing this result $\bf{SMan}$ will be identified with a full subcategory of a bigger category containing the aforementioned infinite dimensional super-vector bundle. Finally the link between our construction and the categorical definition of $\uHom(\mathbf{M},\mathbf{N})$ is discussed.

\section{Superfields vs. vector bundles}\label{first-paragraph}
\vskip 0.5cm
{\bf Notation}. {\em We denote with ${\bf Man}$ the category of finite dimensional smooth manifolds. Let $M,N\in {\bf Man}$ we denote the mapping space $C^{\infty}(M,N)$ with $\Hom_{{\bf Man}}(M,N)$} (or even neglecting ${\bf Man}$ when it is clear from the context)
\vskip0.5cm

Let $\mathbf{M}$ be a smooth supermanifold of superdimension $(p,q)$ over a base $M$. According to the fundamental result of smooth supergeometry \cite{Gawedzki}, there exists a vector bundle $V$ of rank $q$ over $M$ such that $\mathbf{M}$ is diffeomorphic as a supermanifold to $\Pi V$, that is, to the total space of $V$ with the reversed parity of fibres, the algebra of functions on which is canonically identified with $\Gamma (\Lambda^\bullet V^*)$. In other words, each supermanifold is diffeomorphic to a bundle over a classical manifold with an odd fiber. However, a general morphism of supermanifolds $f\colon \mathbf{M}\to \mathbf{N}$, which covers a base map $f_{\sst 0}\colon M\to N$, is not a bundle map.
Later on we shall identify any morphism of smooth supermanifolds with a certain bundle map and then, by the use of an auxiliary connection, with a morphism of vector bundles.
We intend to prove the following:
\begin{proposition}\label{space_of_fields}
Let $\mathbf{M}$ and $\mathbf{N}$ be supermanifolds which are super diffeomorphic to $\Pi W$ and $\Pi V$ for vector bundles $W\to M$ and $V\to N$. There is a one-to-one non canonical correspondence between morphisms $f\in \Hom(\mathbf{M},\mathbf{N})$, which cover a base map $f_{0}:M\to N$ and sections of even degree of the following super-bundle over $M$
 \beq\label{result1}
\xymatrix
{\widetilde{V}:=S_{+}^{(k)} ( W^*[1])\otimes f_0^* (TN)\oplus S^{(k)} ( W^*[1])\otimes f_0^*(V[1])\ar[d]\\
M\\
}
\eeq
with $k$ the rank of $W$, $S^{(k)}(W^*[1])$ ($S_{+}^{(k)}(W^*[1])$) the graded symmetric algebra up to order $k$ of $W^*[1]$ (resp. augmented), and $f_0^* (TN),f_0^* (V[1])$ the pull-back bundle of $TN,V[1]$ along $f_0$.
\end{proposition}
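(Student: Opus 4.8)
The plan is to pass to the dual picture and read off the matching degree by degree. After fixing the Batchelor isomorphisms $\mathbf{M}\cong\Pi W$, $\mathbf{N}\cong\Pi V$, so that $\cO_{\mathbf{M}}\cong\Gamma(\Lambda^\bullet W^*)$ and $\cO_{\mathbf{N}}\cong\Gamma(\Lambda^\bullet V^*)$, a morphism $f$ is the same datum as an even homomorphism of sheaves of superalgebras $f^*\colon\cO_{\mathbf{N}}\to\cO_{\mathbf{M}}$ covering $f_0$. Since $\cO_{\mathbf{N}}$ is generated over $C^\infty(N)$ by the odd sections $\Gamma(V^*)=\Gamma(\Lambda^1 V^*)$, with the only relations being graded-commutativity, such an $f^*$ is completely and freely determined by the two pullbacks $f^*|_{C^\infty(N)}\colon C^\infty(N)\to\Gamma(\Lambda^\bullet W^*)_{\bar 0}$ and $f^*|_{\Gamma(V^*)}\colon\Gamma(V^*)\to\Gamma(\Lambda^\bullet W^*)_{\bar 1}$. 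Expanding by exterior degree in $W^*$ gives a \emph{finite} Taylor expansion, terminating at $k=\rk W$ because $\Lambda^{>k}W^*=0$; this finiteness is exactly what truncates the symmetric algebras at order $k$ in $(\ref{result1})$. The degree-$0$ component of $f^*|_{C^\infty(N)}$ is precisely $f_0^*$, which pins down the base map, so the remaining data consists of the strictly positive-degree coefficients.

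Next I would match parities. Using that $S^{(k)}(W^*[1])=\bigoplus_{j=0}^{k}\Lambda^j W^*$, with the degree-$j$ summand carrying parity $j$ (symmetric algebra of an odd bundle $=$ exterior algebra of the underlying one), and $S_+^{(k)}$ its augmentation ideal $\bigoplus_{j\ge 1}\Lambda^j W^*$, an even section of the first summand of $\widetilde V$ has components in $\Lambda^{\mathrm{even}\ge 2}W^*\otimes f_0^*(TN)$, which match exactly the even nilpotent part of $f^*(y^\mu)$; an even section of the second summand has components in $\Lambda^{\mathrm{odd}}W^*\otimes f_0^*(V[1])$, matching $f^*(\xi^\alpha)$. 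Thus in any adapted chart the correspondence is a tautology; the content of the proposition is that these chart-wise coefficients glue to genuine global sections of the pull-back bundles.

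This globalization is where the auxiliary connection enters, and it is the main obstacle. I would fix a connection $\nabla$ on $TN$ and a connection $\nabla^V$ on $V$. The even nilpotent part of $f^*(y^\mu)$ is an $\mathfrak n$-thickening of $f_0$ (with $\mathfrak n$ the even nilpotent ideal of $\Gamma(\Lambda^\bullet W^*)$); via the exponential map of $\nabla$ — of which only a polynomial jet is needed, since the $W^*$-nilpotents vanish beyond order $k$ — it is encoded by a single $f_0^*(TN)$-valued even nilpotent vector, i.e. a section of $\bigoplus_{j=2,4,\dots}\Lambda^j W^*\otimes f_0^*(TN)$, and this repackaging is chart-independent precisely because $\exp^{\nabla}$ is intrinsic to $\nabla$. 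For the odd part the subtlety is that, even with split (linear) transition functions $\xi'^{\alpha}=A^{\alpha}_{\beta}(y)\,\xi^{\beta}$, the homomorphism $f^*$ mixes exterior degrees through the even nilpotent factors $f^*(A^{\alpha}_{\beta}(y))=A^{\alpha}_{\beta}(f_0)+(\text{nilpotent})$, so the raw coefficients fail to be tensorial; here $\nabla^V$ covariantizes them and cancels this inter-degree mixing, producing a bona fide section of $\bigoplus_{j\ \mathrm{odd}}\Lambda^j W^*\otimes f_0^*(V[1])$.

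Assembling the covariant coefficients yields an even section of $\widetilde V$. Conversely, an even section of $\widetilde V$ reconstructs $f^*$ on the generators — even coordinates through $\exp^{\nabla}$, odd coordinates read off directly — and, there being no further relations to check, this extends uniquely to an algebra homomorphism; the two assignments are then mutually inverse. The non-canonicity is visibly accounted for by the Batchelor splittings together with the choice of $(\nabla,\nabla^V)$. I expect the hard part to be exactly the globalization lemma: proving that the covariant Taylor coefficients are well-defined global tensorial sections, which amounts to controlling their transformation on chart overlaps and verifying that the connections straighten the coordinate jets into covariant form. This is the combinatorial–geometric heart of the argument, and it is where the preliminary machinery of classical jets of $M$, truncated at order $k$, is used.
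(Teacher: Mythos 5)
Your proposal is correct and is essentially the paper's own argument, recast in dual (algebra-homomorphism) language: your truncation-by-nilpotency step is exactly the Lemma inside Proposition \ref{bundle-identification}, and your deferred ``globalization lemma'' --- that a torsion-free connection on $TN$ together with a linear connection on $V$ turns the raw Taylor coefficients into genuine tensorial sections --- is precisely what Propositions \ref{explicit-isomorphism1} and \ref{k-identification} establish, via the geodesic-flow identity $\Phi^{k}\circ j^k=\exp(\chi^{k})$ and parallel transport along the geodesic fibers of a tubular neighborhood of the diagonal. In other words, your chartwise covariant Taylor expansion is the paper's identification $\cJ^k(\cA)\simeq S^{(k)}(\cT^*_N)\otimes_{\cO_N}\cA$ of the jet sheaf of the target along the $k$-th order neighborhood of the diagonal, so the route is the same; the only step you assert rather than prove is exactly that identification, which is where the paper invests its technical work.
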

The graded morphisms, elements of $\uHom(\mathbf{M},\mathbf{N})$, are obtained by the last proposition dropping off the condition about the degree\footnote{We will moreover assume $V$ and $W$ $\Z-$graded.}and considering the projective limit of algebras.

\begin{proposition}\label{infinite vector bundle}The mapping space that to each  $f_{0}\in \Hom(M,N)$ associates the space of section of $\widetilde{V}$ as in proposition $(\ref{space_of_fields})$, is a locally trivial infinite dimensional vector bundle over the smooth manifold $\Hom(M,N)$ .
\end{proposition}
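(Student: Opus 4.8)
The plan is to realize the total space in question as a pullback section bundle over the infinite dimensional manifold $\Hom(M,N)=C^{\infty}(M,N)$ and to trivialize it by parallel transport. First I would recall, following \cite{Mic}, \cite{Mic1}, that $C^{\infty}(M,N)$ carries the structure of a smooth manifold whose charts are obtained from a local addition on $N$: fixing a Riemannian metric on $N$ with exponential map $\exp$, a chart centred at $f_{0}$ identifies the neighbourhood $U_{f_{0}}$ of those $f$ with $f(x)$ in a normal neighbourhood of $f_{0}(x)$ for all $x$ with an open subset of the convenient vector space $\Gamma(f_{0}^{*}(TN))$, via $f\mapsto \xi$, $\xi(x)=\exp_{f_{0}(x)}^{-1}(f(x))$. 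The fibre of our candidate bundle over $f_{0}$ is $\Gamma(\widetilde{V}_{f_{0}})$, and by the very form of $\widetilde{V}$ in Proposition \ref{space_of_fields} the dependence on $f_{0}$ enters only through the pullbacks $f_{0}^{*}(TN)$ and $f_{0}^{*}(V[1])$, the factors $S_{+}^{(k)}(W^{*}[1])$ and $S^{(k)}(W^{*}[1])$ being fixed bundles over $M$. Hence it suffices to prove local triviality of the assignment $f_{0}\mapsto \Gamma(f_{0}^{*}E)$ for a single fixed vector bundle $E\to N$; tensoring with a fixed bundle over $M$ and forming direct sums preserves the vector bundle structure, so the general case follows.

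Next I would construct the local trivialization over $U_{f_{0}}$. Choose a linear connection $\nabla^{E}$ on $E$. For $f\in U_{f_{0}}$ each point $f(x)$ is joined to $f_{0}(x)$ by a unique geodesic inside the normal neighbourhood, and $\nabla^{E}$-parallel transport along it is a linear isomorphism $E_{f(x)}\to E_{f_{0}(x)}$ depending smoothly on $x$; assembled over $M$ this yields an isomorphism of vector bundles $f^{*}E\cong f_{0}^{*}E$ over $M$, inducing a linear isomorphism $P_{f}\colon \Gamma(f^{*}E)\to\Gamma(f_{0}^{*}E)$. I would then define
$$\Phi_{f_{0}}\colon \pi^{-1}(U_{f_{0}})\longrightarrow U_{f_{0}}\times\Gamma(f_{0}^{*}E),\qquad (f,s)\longmapsto (f,P_{f}s),$$
which is fibrewise linear and bijective, $P_{f}$ being invertible. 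Since parallel transport in $TN$ and in $V[1]$ leaves the fixed factors $S^{(k)}(W^{*}[1])$ untouched, these trivializations automatically respect the even/odd splitting of $\widetilde{V}$.

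Finally I would check chart compatibility. On an overlap $U_{f_{0}}\cap U_{f_{1}}$ the transition map acts on the fibre by $P^{(1)}_{f}\circ (P^{(0)}_{f})^{-1}$, a family of bounded linear isomorphisms of the model section spaces depending on $f$; the cocycle condition holds automatically because parallel transports compose along the geodesic triangle. The main obstacle is precisely the smoothness of this family: one must show that the parametrized parallel transport defines a smooth map $U_{f_{0}}\cap U_{f_{1}}\to \mathrm{GL}\big(\Gamma(f_{1}^{*}E)\big)$ in the convenient calculus. This reduces, via the exponential law (cartesian closedness) of \cite{Mic}, \cite{Mic1}, to the joint smoothness of $\exp$, of its inverse on the normal neighbourhood, and of parallel transport in its endpoints, together with the smoothness of the pullback and composition operations on spaces of sections. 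Granting these standard facts of global analysis, $f_{0}\mapsto \Gamma(f_{0}^{*}E)$ is a locally trivial infinite dimensional vector bundle over $\Hom(M,N)$, and by the reduction above the same holds for $\widetilde{V}$.
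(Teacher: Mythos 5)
Your proposal is correct and follows essentially the same route as the paper: the paper likewise reduces to a general statement (its Proposition~\ref{infinite-vb}) about the assignment $f\mapsto C^{\infty}(M,V\otimes f^{*}W)$ with the $S^{(k)}(W^{*}[1])$ factors playing the role of the fixed bundle over $M$, and trivializes over the charts $U_{f}$ by parallel transport of a connection $\nabla_{W}$ along the canonical curves $\gamma_{t}=\psi_{f}(t\cdot\eta)$, whose traces $t\mapsto\gamma_{t}(x)$ are exactly your geodesics from $f_{0}(x)$ to $f(x)$. If anything, your treatment is slightly more careful than the paper's, since you flag the smoothness of the transition maps in the convenient calculus as the point requiring justification, an issue the paper passes over in silence.
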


\subsection{Setup and proof}

\vskip 3mm\npa Let $M$ be a smooth manifold, $\cO_M$ be the sheaf of local smooth functions on $M$.
Let $J^k (M)$ be the bundle of $k-$jets of local smooth functions on $M$, $\cJ^k_M$ be the sheaf of its local sections, and
$j^k \colon \cO_M\to \cJ^k_M$ the morphism of $\R-$sheaves, which associates to any (local) smooth function its $k-$jet.
Given any coordinate chart $(U,\{x^i\})$,
we can identify $\cJ^k_M (U)$ with $C^\infty (U)\otimes  \R [\de x^1,\ldots ,\de x^n]/\mathpzc{m}^{k+1}$, where
$\de x^i$ are generators of the polynomial algebra associated to $\{x^i\}$ and
$\mathpzc{m}$ is an ideal of polynomials vanishing at $0$. Then for each smooth $h (x)$ on $U$ one has
  \beq
  \begin{split}
   j^k (h)(x,\de x)=h(x+\de x)\,& \mathrm{mod}\, \mathpzc{m}^{k+1} \\
   & \sim
   \sum\limits_{\mu_1+\ldots \mu_n \le k} \frac{(\de x^1)^{\mu_1}\ldots (\de x^n)^{\mu_n}}{\mu_1 !\ldots \mu_n!}
   (\pr_{x^1})^{\mu_1}\ldots (\pr_{x^n})^{\mu_n} f (x)\,,
   \end{split}
  \eeq
  where $\mu =(\mu_1,\ldots ,\mu_n)$ is a multi-index with non-negative integer components.
For simplicity, further we shall use the following compact form for multi-index notations:
  \beq
   \sum\limits_{\mu_1+\ldots \mu_n \le k} \frac{(\de x^1)^{\mu_1}\ldots (\de x^n)^{\mu_n}}{\mu_1 !\ldots \mu_n!}
   (\pr_{x^1})^{\mu_1}\ldots (\pr_{x^n})^{\mu_n} f (x)=
   \sum\limits_{\mid \mu \mid =0}^k \frac{(\de x)^\mu}{\mu !} \pr_x^\mu f (x)\,.
  \eeq
Let $\{y^i\}$ be another coordinate system,
such that $y^i =f^i (x)$, then the corresponding change of polynomial generators of $\cJ^k_M (U)$ over $C^\infty (U)$ is
  \beq\label{formal_change}
   \de y^i & = &  \left( f^i (x +\de x)-f^i (x) \right)\,\mathrm{mod}\, \mathpzc{m}^{k+1} \sim
   \sum\limits_{\mid \mu \mid =1}^k \frac{(\de x)^\mu}{\mu !} \pr_x^\mu f^i (x) \,.
  \eeq
Besides the canonical multiplication on local functions, given by the embedding $i^k\colon f\mto f\otimes 1$
for each $f$ in $\cO_M$, the sheaf
$\cJ^k_M$ admits the second structure of an $\cO_M-$algebra, determined by the embedding $j^k$.

\vskip 3mm\npa These two structures (of ringed spaces over $M$) have
 the following clear geometrical meaning (cf.~\cite{Grothendieck65}, also \cite{Spencer69}). Let $M\times M$ be the Cartesian product of two copies of $M$ with natural projections $\sigma$ and $\tau$ onto the factors:

  \beq
    \xymatrix{ & M\times M \ar[dl]_{\sigma}\ar[dr]^{\tau} & \\
               M && M  }
  \eeq

\noindent Let $M\hookrightarrow M\times M$ be the diagonal embedding and let
$\mathpzc{m}_M$ be the sheaf of
local smooth functions on $M\times M$ vanishing on the diagonal.
The coset sheaf $\cO_{M\times M}/\mathpzc{m}_M^{k+1}$ can be thought of as the structure
sheaf of the $k-$th order neighborhood of the diagonal, denoted as $M^{\sst (k)}$.
The restrictions of the projections $\sigma$ and $\tau$ to $M^{\sst (k)}$, which we denote as $\sigma_k$ and $\tau_k$, respectively, give us the structure of two ringed spaces mentioned above, such that $i^k=\sigma_k^*$ and $j^k=\tau_k^*$. Further we shall treat $\cO_{M\times M}$ and its coset sheaves as sheaves of $\cO_M-$algebras with respect to the first module structure, unless another assumption is explicitly stated.
Then $\cJ^k_M$ is isomorphic to $\cO_{M\times M}/\mathpzc{m}_M^{k+1}$.

\vskip 3mm\noindent In fact, in the definition of the $k-$th order neighborhood
one can replace $\cO_{M\times M}$ with $\cO_{\tilde U}$, where $\tilde{U}$ is any tubular neighborhood of the diagonal.
More precisely, let $r (x,x')$ be a smooth function on $U\times U$ which represents an element $[r]$ in the quotient sheaf of algebras (here $\{x'\}$ are the same coordinates on the copy of $U$). Then the image of $[r]$ in $\cJ^k (U)$ is
  \beq
   r(x,x+\de x)\,\mathrm{mod}\, \mathpzc{m}_X^{k+1}\sim
   \sum\limits_{\mid \mu \mid =0}^k \frac{(\de x)^\mu}{\mu !} \pr_{x'}^\mu r (x,x')_{\mid x=x'}\,.
  \eeq

\vskip 2mm\npa
Given an arbitrary (torsion free) connection $\na$ in $TM$, we can establish a diffeomorphism between
  a neighborhood of the zero section in $TM$ and some $\tilde{U}$.\footnote{We use the fact that the normal bundle to the diagonal in $M\times M$ is
  naturally isomorphic to $TM$.} Indeed, let $z(x,\xi,t)$ be the solution of the Cauchy problem for geodesic equation with the initial data $(x,\xi)\in TM$,
   then the required map is $(x,\xi)\mto (x,z(x,\xi,1))$.
The image of the zero section in $TM$ under the map above coincides with the diagonal.
As a simple corollary, the $k-$th order neighborhood of the diagonal is isomorphic to the $k-$th order neighborhood of the zero section in $TM$, which we denote as $TM_0^{\sst (k)}$.
 More precisely, let $\{x^i,\xi^i\}$ and $\{x^i,\de x^i\}$ be the local coordinates of $TM_0^{\sst (k)}$ and $M^{\sst (k)}$, respectively, which are canonically associated to
coordinates $\{x^i\}$ of $M$, and let
$\Gamma_{jl}^i (x)$ be the Christoffel symbols of $\na$ in these coordinates. Then,
taking into account the property $z^i (x,t\xi,1)=z^i (x,\xi,t)$, we obtain the following asymptotic expansion
of $z^i (x,\xi,1)$:
  \beq\label{expansion1}
  \begin{split}
   z^i (x,\xi,1)&=\sum\limits_{m=0}^k \frac{1}{m!}\pr_t^m z^i (x,t\xi,1)_{\mid t=0} \!\!\! \mod \xi^{k+1}\\
   &=\sum\limits_{m=0}^k \frac{1}{m!}\pr_t^m z^i (x,\xi,t)_{\mid t=0}  \!\!\!\mod \xi^{k+1}\,
  \end{split}
  \eeq
  where by $\mathrm{mod}\, \xi^{k+1}$ we mean the quotient modulo all terms of degree
  higher than $k$.
  Now we apply the geodesic flow equation for $z^i (x,\xi,t)$
  \beq\label{geodesic-eqn}
   \pr_t^2 z^i  =-\Gamma_{jl}^i(z)\,\pr_t z^j \pr_t z^l
  \eeq
with the initial data
  \beq\label{initial-data}
   z^i (x,\xi,0)=x^i\,, \hspace{3mm}\pr_t z^i (x,\xi,0)=\xi^i
  \eeq
to establish the required isomorphism of $k-$th order neighborhoods $(x,\xi)\mto (x,\de x (x,\xi))$:
  \beq
   \de x^i &=&\left(z^i (x,\xi,1)- x^i \right)\!\mod \xi^{k+1}= \\ \nonumber &=&
   \xi^i -  \frac{1}{2}\Gamma_{jl}^i (x)\,\xi^j\xi^l +  \frac{1}{6} \left(-\pr_{x^s} \Gamma_{jl}^i (x)+
    2\Gamma_{ps}^i (x)\Gamma_{jl}^p (x)\right)\xi^s\xi^j\xi^l +\ldots \,.
  \eeq

\vskip 2mm\noindent The structure sheaf of $TM_0^{\sst (k)}$ is canonically
isomorphic to $S^{(k)} (\cT^*_M)$ where $\cT^*_M$ is the sheaf of smooth sections of the cotangent bundle and
$S^{(k)}(\cT^*_M)$ is the quotient of the symmetric algebra of $\cT^*_M$ generated over $\cO_M$ by the ideal of elements of degree higher than $k$. Therefore $\cJ^k_M\simeq S^{(k)} (\cT^*_M)$ as sheaves of $\cO_M-$algebras.
  The next proposition provides us with an explicit formula for the isomorphism between $\cJ_M^k$ and
$S^{(k)} (\cT^*_M)$ (fixed by a torsion free connection $\na$ in $TM$) which we denote by $\Phi^{k}$.
Let $\chi^{k}$ be a derivation
of the quotient sheaf $S^{(k)} (\cT^*_M)$ determined by the symmetrized covariant derivative
 \beq
  S^{*} (\cT^*_M)\xrightarrow{\na}
  \cT^*\otimes S^{*} (\cT^*_M)\to S^{*+1} (\cT^*_M)\,.
 \eeq
 Here the last arrow is the symmetrization map\footnote{The symmetrized covariant derivative preserves the ideal of symmetric tensors of degree $>k$, thus $\chi^{k}$ is well-defined.}.

\begin{proposition}\label{explicit-isomorphism1}
For any local function $h$ the following identity holds: $\Phi^{k}\circ j^k (h)=\exp(\chi^{k})(h)$.
\end{proposition}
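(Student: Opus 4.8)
The plan is to identify both sides of the asserted identity with one and the same explicit polynomial in the fibre coordinate $\xi$ of $TM$, and to match them degree by degree through the geodesic flow. The first step is to unwind the definition of $\Phi^k$. Since $\Phi^k$ is induced by the diffeomorphism $(x,\xi)\mto (x,z(x,\xi,1))$ between the $k$-th order neighborhoods $TM_0^{\sst(k)}$ and $M^{\sst(k)}$, and since $j^k=\tau_k^*$ is represented on a tubular neighborhood of the diagonal by $h(x+\de x)$, I would first record that
\beq
\Phi^k\circ j^k(h)(x,\xi) &=& h\bigl(z(x,\xi,1)\bigr)\ \mathrm{mod}\ \xi^{k+1}\,,
\eeq
i.e. $\Phi^k\circ j^k(h)$ is exactly the order-$k$ Taylor polynomial of $h$ evaluated along the geodesic issued from $x$ with initial velocity $\xi$, viewed in $S^{(k)}(\cT^*_M)$ under the canonical identification $\xi^i\leftrightarrow dx^i$ of fibre-linear functions on $TM$ with generators of the symmetric algebra.

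Second, I would produce this Taylor polynomial via the geodesic spray. Let $X=w^i\pr_{z^i}-\Gamma^i_{jl}(z)\,w^jw^l\,\pr_{w^i}$ be the spray vector field on $TM$ (whose integral curves are governed by (\ref{geodesic-eqn})--(\ref{initial-data})), and let $\tilde h=\pi^*h$ be the pull-back of $h$ along $\pi\colon TM\to M$. Setting $g(t):=h(z(x,\xi,t))=\tilde h(z(t),w(t))$ along the flow starting at $(x,\xi)$, one has $g^{(m)}(0)=(X^m\tilde h)(x,\xi)$. A one-line induction using the explicit form of $X$ shows that $X$ raises the degree in $w$ by one, so $X^m\tilde h$ is homogeneous of degree $m$ in $w$; combined with the homogeneity $z(x,\xi,t)=z(x,t\xi,1)$ used in (\ref{expansion1}), this identifies the degree-$m$ component of $h(z(x,\xi,1))$ with $\tfrac{1}{m!}(X^m\tilde h)(x,\xi)$. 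Working modulo $\xi^{k+1}$ everything is polynomial, so summing over $0\le m\le k$ gives $\Phi^k\circ j^k(h)=\sum_{m=0}^k\tfrac{1}{m!}(X^m\tilde h)(x,\xi)$.

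Third, and this is where the connection genuinely enters, I would prove the single operator identity that the spray $X$, restricted to fibre-homogeneous polynomials of degree $m$ (i.e. to $S^m(\cT^*_M)$ under $w^i\leftrightarrow dx^i$), coincides with the symmetrized covariant derivative $\chi$. Writing a homogeneous element as $P=P_{i_1\ldots i_m}(x)\,w^{i_1}\cdots w^{i_m}$, a direct computation of $XP$ using $\pr_{w^i}P=m\,P_{i i_2\ldots i_m}w^{i_2}\cdots w^{i_m}$ reproduces $(\na_j P_{i_1\ldots i_m})w^jw^{i_1}\cdots w^{i_m}$, the coordinate expression of $\chi(P)$: the term $-\Gamma^i_{jl}w^jw^l\pr_{w^i}$ of the spray accounts precisely for the Christoffel correction in $\na$, while the automatic commutativity of the generators $w$ realizes the symmetrization map in the definition of $\chi$. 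Hence $X^m\tilde h=\chi^m(h)$ for every $m$, and substituting into the previous step yields $\Phi^k\circ j^k(h)=\sum_{m=0}^k\tfrac{1}{m!}\chi^m(h)=\exp(\chi^k)(h)$, the series being automatically finite since $\chi$ raises degree and $S^{(k)}$ kills everything of degree $>k$.

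The routine items (smoothness, well-definedness of $\chi^k$ on the quotient, and the low-order comparison with (\ref{expansion1}), where $\chi(h)=dh$ and $\chi^2(h)=(\pr_i\pr_j h-\Gamma^l_{ij}\pr_l h)\,dx^idx^j$) I would relegate to remarks. The main obstacle is the bookkeeping of the third step: one must be scrupulous that the combinatorial factor $m$ arising from $\pr_w$ on a symmetric coefficient matches the sum over slots in the covariant derivative on $S^m(\cT^*_M)$, so that the $\pr_w$-part of the spray yields the \emph{full} Christoffel term of $\na$ with the correct weight. Once this operator identity $X|_{S^m}=\chi$ is in place, the exponential formula follows formally from the flow interpretation with no further computation.
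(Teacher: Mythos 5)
Your proof is correct and follows essentially the same route as the paper: both rest on identifying the symmetrized covariant derivative $\chi^{k}$ with the geodesic spray $\xi^s\pt_{x^s}-\Gamma^s_{jl}(x)\,\xi^j\xi^l\pt_{\xi^s}$ in the induced coordinates on $TM$, and on extracting the exponential from the Taylor expansion of the geodesic flow at $t=1$ via the homogeneity $z(x,t\xi,1)=z(x,\xi,t)$ (the paper phrases the flow step through the total derivative $D_t$ on the prolongation of the geodesic equation, which is the same vector field). The only differences are matters of completeness, in your favor: you prove the spray-equals-$\chi$ identity by direct computation on $S^m(\cT^*_M)$ and argue for arbitrary $h$, whereas the paper simply asserts the coordinate expression (\ref{derivation-chi}) and reduces to the generators $h=x^i$ by coordinate independence.
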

\proof Given that the statement, we are going to prove, is coordinately independent, it is enough to check it in arbitrary local coordinates $\{x^s\}$ for $h=x^i$.  In the associated coordinates $\{x^i, \xi^i\}$ on $TM$ the derivation
   $\chi^{k}$ will have the following expression:
   \beq\label{derivation-chi}
  \chi = \xi^s \frac{\pt}{\pt x^s}- \Gamma_{jl}^s (x)\, \xi^j \xi^l\frac{\pt}{\pt \xi^s} \mod \xi^{k+1}\,,
  \eeq
  where $\Gamma^i_{jk}$ are the Levi-Civita coefficients of $\na$. Let $\{t,z^s, p^s, p^s_2, p^s_3, \ldots\}$
be the local coordinates on the space of jets of parameterized curves in $M$ associated to $\{x^s\}$, where
$z^s$ is a copy of $x^s$ (used for the convenience),
$p^s_k$ corresponds to the $k-$th derivative of $z^s$,
 and $p^s=p^s_1$. The 2d order geodesic equation (\ref{geodesic-eqn}) will be written as follows:
 \beq\label{geodesic-eqn-in-jets}
  p^s_2  =-\Gamma_{jl}^s(z)\, p^j p^l\,.
 \eeq
Apparently, $p^s_k=D_t^k (z^s)$, where $D_t$ is the total derivative with respect to $t$.
Let us consider the infinite prolongation of (\ref{geodesic-eqn-in-jets}) as a subspace in the jet space.
 The total derivative is tangent to this subspace. Then $p_k$ becomes a function of $z$ and $p$ for all
$k\ge 2$, which allows to express $D_t$ in the form
 \beq\label{total-derivative}
D_t =\pt_t + p^s \frac{\pt}{\pt z^s}- \Gamma_{jl}^s(z)\, p^j p^l\frac{\pt}{\pt p^s}\,.
 \eeq
Using (\ref{expansion1}), (\ref{initial-data}) and (\ref{total-derivative}) and taking into account
(\ref{derivation-chi}), we obtain that
 \beq
  \Phi^{k} (x^i)=\sum\limits_{m=0}^k \frac{1}{m!}\left( \xi^s \frac{\pt}{\pt x^s}- \Gamma_{jl}^i(x)\, \xi^j \xi^l\frac{\pt}{\pt \xi^s} \right)^m (x^i)
   \mod \xi^{k+1}=
  \exp(\chi^{k}) (x^i)\,.
 \eeq
$\square$


\vskip 3mm\npa The last result can be generalized to the sheaf of $k-$jets of sections $\cJ^k (\cA)$ of an arbitrary sheaf of graded commutative algebras $\cA$ over $\cO_M$ which is freely generated by some finitely graded vector bundle $V=\oplus_{j\in\Z} V^j$ over $M$, i.e. $\cA=S^* (\cV^*)$,
where $\cV$ is the sheaf of smooth sections of $V$.

\begin{proposition}\label{k-identification}
The algebras $\cJ^k (\cA)$ and $S^{(k)} (\cT^*_M)\otimes_{\cO_M} \cA$ are isomorphic.
The choice of an isomorphism is uniquely fixed by the choice of a torsion free connection in $TM$ and a graded connection in $V$.
\end{proposition}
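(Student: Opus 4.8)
The plan is to promote the explicit exponential formula of Proposition \ref{explicit-isomorphism1} from functions to the whole generating set of $\cA$, using the extra datum of the graded connection, and then to deduce bijectivity by a symbol (associated graded) argument. I begin by reducing the problem to generators. Since $\cA=S^*(\cV^*)$ is freely generated over $\cO_M$ by the linear sheaf $\cV^*$, and since the jet prolongation $j^k$ is a morphism of $\R$-algebras for the second ($\tau$) structure, the sheaf $\cJ^k(\cA)$ is generated as an $\cO_M$-algebra, for the first ($i^k=\sigma^*$) structure, by $j^k(\cO_M)$ together with $j^k(\cV^*)$: the $k$-jet of a local section $\sum_\alpha f_\alpha\, e^\alpha$ is $\sum_\alpha j^k(f_\alpha)\,\prod j^k(e^a)$. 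Hence it suffices to prescribe the desired isomorphism $\Phi^k_\cA$ on these generators. On $i^k(\cO_M)$ it is the canonical inclusion $f\mapsto f\ot 1$, and on $j^k(\cO_M)$ I set $\Phi^k_\cA\circ j^k=\exp(\chi^k)$ exactly as in Proposition \ref{explicit-isomorphism1}, so that the function part is governed by the torsion-free connection $\na$ in $TM$.

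Next I feed in the graded connection. A graded connection $\na^V$ in $V$ induces, by duality and the graded Leibniz rule, a degree-preserving derivation of the whole algebra $\cA$; combined with $\na$ on $TM$ it yields a symmetrized total covariant derivative
\beq
\chi_\cA\colon S^* (\cT^*_M)\ot_{\cO_M}\cA & \longrightarrow & S^{*+1} (\cT^*_M)\ot_{\cO_M}\cA\,,
\eeq
which restricts to $\chi$ on the function part. Because $\na^V$ is degree-preserving, $\chi_\cA$ is an \emph{even} derivation of the graded-commutative algebra $S^* (\cT^*_M)\ot_{\cO_M}\cA$, and since it raises the symmetric $\cT^*_M$-degree by one it is nilpotent after truncation modulo degree $>k$; thus $\exp(\chi^k_\cA)$ is a finite sum. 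Being the exponential of an even derivation, $\exp(\chi^k_\cA)$ is multiplicative, so its restriction to $\cA$ is an algebra morphism $\cA\to S^{(k)}(\cT^*_M)\ot_{\cO_M}\cA$. I then take $\Phi^k_\cA$ to be the unique $\cO_M$-algebra map with $\Phi^k_\cA\circ j^k=\exp(\chi^k_\cA)$, which on $j^k(\cV^*)$ produces precisely the covariant Taylor expansion determined by $\na^V$. Conceptually this is the multiplicative refinement of the standard connection-induced splitting $\cJ^k(\cE)\simeq S^{(k)}(\cT^*_M)\ot_{\cO_M}\cE$ valid for any vector bundle $E$, specialized to the algebra bundle $S^*(V^*)$ equipped with a connection that is itself a derivation.

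To prove that $\Phi^k_\cA$ is an isomorphism I would filter both sheaves by jet order, equivalently by symmetric degree in $\cT^*_M$. The map $\Phi^k_\cA$ preserves this finite filtration, and on the associated graded it coincides with the canonical symbol identification $\mathrm{gr}\,\cJ^k(\cA)\simeq S^{(k)}(\cT^*_M)\ot_{\cO_M}\cA$: the top-degree component of $\exp(\chi^k_\cA)$ is the symmetrization of the $k$-fold covariant derivative, which is exactly the jet symbol. As an isomorphism on the associated graded of a finite filtration, $\Phi^k_\cA$ is then an isomorphism. The construction also makes transparent the last assertion: the exponential formula determines $\Phi^k_\cA$ on generators, hence everywhere, and it depends on nothing but $\na$ and $\na^V$, so these two choices fix the isomorphism uniquely.

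The main obstacle I anticipate is the well-definedness bookkeeping rather than the symbol computation. One must confirm that prescribing $\Phi^k_\cA$ on generators genuinely respects all relations in $\cJ^k(\cA)$, i.e. that the assignment $j^k\mapsto\exp(\chi^k_\cA)$ is consistent; here the freeness of $\cA$ over $\cO_M$ is what rescues the argument, since the only relations are the algebra axioms, which are preserved because both $j^k$ and $\exp(\chi^k_\cA)$ are algebra morphisms. The accompanying delicate point is the tracking of Koszul signs: one has to verify that the induced connection on $S^*(\cV^*)$ is an even derivation, so that $\exp(\chi^k_\cA)$ is truly multiplicative and the identification of its leading term with the canonical symbol goes through as in Proposition \ref{explicit-isomorphism1}.
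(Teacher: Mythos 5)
Your route is genuinely different from the paper's. The paper's proof is geometric: it identifies $\cJ^k(\cA)$ with $\tau_k^*(\cA)$ (so that $j^k=\tau_k^*$), notes that $\cJ^k(\cA)$ is then freely generated over $\cJ^k_M$ by $\tau_k^*(\cV^*)$, and so reduces the whole statement to the module-level claim $\sigma_k^*\cV^*\simeq\tau_k^*\cV^*$; that claim is proved by parallel transport (with respect to the connection in $V$) along the canonically fixed contraction of the $\sigma$-fibers of the geodesic tubular neighborhood of the diagonal supplied by the torsion-free connection, truncated afterwards to the $k$-th order neighborhood. In that approach multiplicativity and well-definedness come for free (everything is a pullback along morphisms of ringed spaces), while the explicit exponential formula is deferred to Proposition \ref{explicit-isomorphism2}. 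You run the logic in the opposite direction: you \emph{define} the map by $\exp(\chi^k_\cA)$ and prove bijectivity by a filtration/symbol argument. This buys you the first bullet of Proposition \ref{explicit-isomorphism2} at no extra cost and avoids the tubular-neighborhood geometry; your symbol argument itself is sound (the truncated $\chi_\cA$ is nilpotent, the exponential of an even derivation is multiplicative, the top term of $\exp(\chi^k_\cA)$ on a section with vanishing lower jets is the symmetrized top covariant derivative, and a filtered morphism inducing the canonical isomorphism on the associated graded of a finite filtration is bijective).

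There is, however, a genuine gap in your well-definedness step, and the rescue you propose is wrong as stated. You define $\Phi^k_\cA$ by prescribing values on the generators $j^k(\cO_M)$ and $j^k(\cV^*)$ and extending as an $\cO_M$-algebra morphism, claiming that ``the only relations are the algebra axioms'' because $\cA$ is free over $\cO_M$. But $\cJ^k(\cA)$ is \emph{not} the free graded-commutative $\cO_M$-algebra on $j^k(\cA)$: besides multiplicativity of $j^k$ there are relations coupling the jets to the first ($\sigma$) $\cO_M$-structure, most importantly the truncation relations $\prod_{i=0}^{k}\bigl(j^k(f_i)-f_i\bigr)=0$ for $f_0,\dots,f_k\in\cO_M$ (this is exactly the ideal $\mathpzc{m}^{k+1}$), together with their consequences, e.g. $j^1(fg)=f\,j^1(g)+g\,j^1(f)-fg$ in $\cJ^1_M$. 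Freeness of $\cA$ over $\cO_M$ does nothing to remove these. The gap is fixable, but by a different mechanism than the one you invoke: the restriction of $\exp(\chi^k_\cA)$ to $\cA$ is an $\R$-linear differential operator of order $\le k$, hence by the universal property of jets it factors uniquely as $\Phi^k_\cA\circ j^k$ with $\Phi^k_\cA$ an $\cO_M$-linear morphism $\cJ^k(\cA)\to S^{(k)}(\cT^*_M)\ot_{\cO_M}\cA$; multiplicativity of $\Phi^k_\cA$ then follows since $j^k(\cA)$ generates $\cJ^k(\cA)$ as an $\cO_M$-module and both $j^k$ and $\exp(\chi^k_\cA)$ are algebra maps. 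With that repair (or by falling back on the paper's geometric construction), the remainder of your argument, including the uniqueness statement, goes through.
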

\proof One can check that $\cJ^k(\cA)$ is canonically isomorphic to $\tau_k^* (\cA)$ such that
$j^k (s)=\tau_k^* (s)$ for any local section $s$ of $\cA$, where $\tau_k$ is the second projection of $M^{\sst (k)}$
onto $M$. Taking into account that $\cA$ is freely generated as a graded commutative algebra over $\cO_M$ by
$\cV^*$ and thus $\cJ^k(\cA)$ is freely generated over $\cJ_k$ by $\tau_k^* (\cV^*)$, it is sufficient to prove that $\sigma_k^* \cV^*\simeq \tau_k^* \cV^*$ as $\cJ^k$-modules. The last statement is a corollary of the following fact. Let $\tilde U$ be a tubular (geodesic) neighborhood of the diagonal given by an arbitrary torsion free connection in $TM$ and let $\sigma$ and $\tau$ be the restrictions to $\tilde U$ of the corresponding projections onto $M$. Let us fix a connection in $V$. Then $\sigma^* V$ and $\tau^* V$ are canonically
isomorphic over $\tilde U$. Indeed, $\tilde U$ is isomorphic to $TM$ as a bundle over $M$ where the first bundle structure
is determined by $\sigma$. In this setting the $\sigma-$fibers are not only contractible, but moreover the contraction is uniquely fixed. This allows to trivialize $\tau^* V$ along $\sigma-$fibers in the unique way by use of parallel transport. Finally we apply the same technique as above to pass from the tabular neighborhood to the $k-$th order neighborhood of the diagonal. $\square$

\begin{rem} This procedure is actually dual to the ``quantization" of symbols of differential operators by use of connections.
\end{rem}

\noindent Let us denote by $\Phi^{k}$ the isomorphism between $\cJ^k (\cA)$ and
$S^{(k)} (\cT^*_M)\otimes \cA$ fixed by a torsion free connection $\na^{\sst TM}$ in $TM$
and a vector connection
$\na^{\sst V}$ in $V$. Let $\chi^{k}$ be the mapping determined by the symmetrized covariant derivative
$S^{*} (\cT^*_M)\otimes \cA \to S^{*+1} (\cT^*_M)\otimes \cA$ on the quotient $S^{(k)} (\cT^*_M)\otimes\cA$.

\begin{proposition}\label{explicit-isomorphism2}
\mbox

 \begin{itemize}
 \item The composition $\Phi^{k}\circ j^k \colon \cA\to S^{(k)} (\cT^*_M)\otimes \cA$ coincides with the restriction of $\exp(\chi^{k})$.
 \item Let $\Phi^{k}_1$ be the isomorphism determined by another couple of connection. Then there exists a nilpotent automorphism $\Psi$ of the algebra
 $S^{(k)}(\cT^*_M)\otimes S^*(\cA)$ over $\cO_M$, such that $\Phi^{k}_1=\Psi\circ \Phi^{k}$.
\end{itemize}
\end{proposition}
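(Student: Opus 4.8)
The plan is to deduce both assertions from the geometric construction of $\Phi^{k}$ given in Proposition \ref{k-identification}, together with the already established Proposition \ref{explicit-isomorphism1}. For the first bullet the key observation is that both $\Phi^{k}\circ j^k$ and the restriction of $\exp(\chi^{k})$ to $\cA$ are homomorphisms of (graded-commutative) $\R$-algebras into $S^{(k)}(\cT^*_M)\otimes\cA$: the former because $j^k=\tau_k^*$ is a ring homomorphism and $\Phi^{k}$ is an algebra isomorphism, the latter because $\chi^{k}$ is an even derivation that strictly raises the symmetric $\cT^*_M$-degree (hence is nilpotent on the quotient), so that $\exp(\chi^{k})$ is a well-defined algebra automorphism. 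Since $\cA=S^*(\cV^*)$ is generated as an $\R$-algebra over $\cO_M$ by local functions together with local sections of $\cV^*$, it suffices to check the identity on these two families of generators.

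On a local function $h\in\cO_M$ the identity $\Phi^{k}\circ j^k(h)=\exp(\chi^{k})(h)$ is exactly Proposition \ref{explicit-isomorphism1}, since in this case $\chi^{k}$ reduces to the derivation (\ref{derivation-chi}) built from the coefficients of $\na^{\sst TM}$. It remains to verify the identity on a local section $s$ of $\cV^*$. Here I would use that, by the construction in Proposition \ref{k-identification}, $\Phi^{k}$ is precisely the trivialization of $\tau^*V^*$ along the $\sigma$-fibres of the geodesic tubular neighbourhood effected by $\na^{\sst V}$-parallel transport. Writing $j^k(s)=\tau_k^*s$ and expanding the parallel transport of $s$ along the geodesic $t\mapsto z(x,\xi,t)$ in the parameter $t$, one obtains the covariant Taylor series $\sum_{m=0}^{k}\tfrac{1}{m!}(\na^{\sst V})^{m}_{(\xi,\dots,\xi)}s$, which is by definition the $m$-fold symmetrized covariant derivative, i.e. $\exp(\chi^{k})(s)$. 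The bookkeeping is the same ODE-expansion as in Proposition \ref{explicit-isomorphism1}, now with the total derivative (\ref{total-derivative}) acting also on the fibre generators through the connection coefficients of $\na^{\sst V}$; this coupling of the base geodesic flow to the fibre parallel transport is the one genuinely new computation, and I expect it to be the main technical point of the proof.

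For the second bullet I would set $\Psi:=\Phi^{k}_1\circ(\Phi^{k})^{-1}$, which is tautologically an $\cO_M$-algebra automorphism of $B:=S^{(k)}(\cT^*_M)\otimes\cA$ satisfying $\Phi^{k}_1=\Psi\circ\Phi^{k}$. To see that $\Psi$ is unipotent (that is, $\Psi-\Id$ is nilpotent) I would use the filtration of $\cJ^k(\cA)$ by jet order, whose associated graded is the symbol algebra $\bigoplus_{p=0}^{k}S^p(\cT^*_M)\otimes\cA$. Both $\Phi^{k}$ and $\Phi^{k}_1$ are filtered isomorphisms onto $B$ (equipped with the filtration by symmetric degree in $\cT^*_M$) and induce on the associated graded the \emph{same} canonical symbol identification, since the principal symbol is intrinsic and independent of the choice of connections. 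Consequently $\mathrm{gr}(\Psi)=\Id$, so $\Psi-\Id$ strictly raises the symmetric degree; as this degree is bounded by $k$ one gets $(\Psi-\Id)^{k+1}=0$. Finally, since $\Psi$ is an algebra automorphism, the finite sum $\log\Psi=\sum_{n\ge1}\tfrac{(-1)^{n+1}}{n}(\Psi-\Id)^{n}$ defines a nilpotent derivation of $B$ over $\cO_M$, exhibiting $\Psi=\exp(\log\Psi)$ as the required nilpotent (unipotent) automorphism. This part I expect to be routine once the filtered structure is in place.
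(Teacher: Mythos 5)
Your proposal is correct, but it does not follow the paper's route, so a comparison is in order. For the first bullet you and the paper do essentially the same thing: the paper dismisses it as ``a simple generalization of Proposition \ref{explicit-isomorphism1}'', while you make the reduction explicit (both $\Phi^{k}\circ j^k$ and $\exp(\chi^{k})$ are algebra homomorphisms, so it suffices to check on $\cO_M$ and on sections of $\cV^*$, where the identity is the covariant Taylor expansion of parallel transport along geodesics); this is a legitimate filling-in of details the paper omits. For the second bullet, however, the paper argues dynamically: it interpolates linearly between the two pairs of connections, obtaining a family $\Phi^{k}_t$ with $\Phi^{k}_t\circ j^k=\exp(\chi^{k}_t)$, deduces $\Phi^{k}_t=\left(\exp(\chi^{k}_t)\circ\exp(-\chi^{k})\right)\circ\Phi^{k}$ from the fact that $\cJ^k(\cA)$ is generated by $j^k(\cA)$ over $\cO_M$, differentiates in $t$ to get $\pt_t\Phi^{k}_t=\psi_t\circ\Phi^{k}_t$ with $\psi_t=\int_0^1\exp\bigl(\tau\,\ad_{\chi^{k}_t}\bigr)(\de^k)\,\md\tau$ a nilpotent derivation (since $\ad_{\chi^{k}_t}$ raises the grading of $\g^r$), and finally defines $\Psi$ as the time-one solution of the Cauchy problem $\pt_t\Psi_t=\psi_t\circ\Psi_t$, $\Psi_{t=0}=\Id$. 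You instead define $\Psi=\Phi^{k}_1\circ(\Phi^{k})^{-1}$ outright and establish unipotency by a static filtration argument: both isomorphisms are filtered and induce the canonical symbol identification on the associated graded, whence $(\Psi-\Id)^{k+1}=0$ and $\log\Psi$ is a nilpotent derivation. Your route is shorter, avoids the ODE and the interpolation of connections, and incidentally clarifies that ``nilpotent automorphism'' should be read as ``unipotent, i.e.\ exponential of a nilpotent derivation''; what the paper's flow construction buys is an explicit generator, namely $\psi_t$ expressed through the difference tensor $\de^k\in\g^1$ of the two pairs of connections, together with a concrete path of automorphisms joining $\Psi$ to the identity. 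The one step you assert rather than prove --- that $\mathrm{gr}(\Phi^{k})$ is the canonical identification, independent of the connections --- is indeed true and follows from the expansions already in the paper ($\de x^i=\xi^i+O(\xi^2)$ for the geodesic chart, and parallel transport $=\Id+O(\xi)$ along $\sigma$-fibres), or directly from $\Phi^{k}\circ j^k=\exp(\chi^{k})=\Id+(\text{degree-raising terms})$; it would be worth one sentence in a final write-up. A last cosmetic point: the paper writes the algebra as $S^{(k)}(\cT^*_M)\otimes S^*(\cA)$, but since $\cA=S^*(\cV^*)$ is already free, the algebra on which $\Psi$ acts is $S^{(k)}(\cT^*_M)\otimes\cA$, exactly as you use.
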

\proof The first part of Proposition \ref{explicit-isomorphism2} is a simple generalization of Proposition \ref{explicit-isomorphism1}. Let $\Phi^{k}_t$,
$t\in [0,1]$ be the family of isomorphisms determined by the linear interpolation between the new and old connections,
such that $\Phi^{k}_{t= 1} = \Phi^{k}_1$, $\Phi^{k}_{t=0} = \Phi^{k}$. The variation
 $\de^k =\pt_t \chi^{k}_t$ can be identified with a section of $\g^1$ where
 \beqn
  \g^r = S^{r+1} (\cT^*_M)\otimes \cT_M\oplus S^r (\cT^*_M)\otimes \End (\cA) \,.
 \eeq
 Apparently, $\g^+ =\oplus_{r>1} \g^r$ can be viewed as the sheaf of nilpotent derivations of $S^{(k)}(\cT^*_M)\otimes S^*(\cA)$ over $\cO_M$, thus
 $\de^k$ is a nilpotent derivation. Then
 \beqn
  \Phi^{k}_t\circ j^k = \exp(\chi^{k}_t)
  = \exp(\chi^{k}_t) \circ \exp(-\chi^{k}) \circ \Phi^{k}\circ j^k \,.
 \eeq
Taking into account that $\cJ^k (\cA)$ is generated by $j^k (\cA)$ over $\cO_M$, we immediately obtain that
 \beqn
  \Phi^{k}_t = \left(\exp(\chi^{k}_t) \circ \exp(-\chi^{k})\right) \circ\Phi^{k}\,,
 \eeq
which implies that $\pt_t \Phi^{k}_t =\psi_t \circ\Psi^{k}_{t}$, where
 \beqn
  \psi_t = \int\limits_0^1 \exp(\tau\chi^{k}_t)\circ \pt_t \chi^{k}_t \circ\exp(-\tau\chi^{k}_t)\md\tau =
   \int\limits_0^1 \exp\left( \tau ad_{\chi^{k}_t} \right) (\de^k)\md\tau\,.
 \eeq
It is clear that $ad_{\chi^{k}_t} (\g^r)\subset \g^{r+1}$, therefore $\psi_t\in\g^+$ and
hence $\Phi^{k}_1=\Psi_{t=1}\circ \Phi^{k}$, where $\Psi_t$ is the solution to
the Cauchy problem
\beqn
 \pt_t \Psi_t = \psi_t \circ \Psi_t \, , \hspace{3mm} \Psi_{t =0}  = \Id \,.
\eeq
 Given that $\psi_t$ belongs to the Lie algebra of nilpotent derivations
 of $S^{(k)}(\cT^*_M)\otimes S^*(\cA)$ over $\cO_M$, we deduce that $\Psi_t$ must be a nilpotent automorphism
 of the same algebra. Setting $\Psi=\Psi_{t=1}$ we finalize the proof.
$\square$

\vskip 3mm\npa Let $\mathbf{M}$ and $\mathbf{N}$ be supermanifolds which are super diffeomorphic to $\Pi W$ and $\Pi V$ for vector bundles $W\to M$ and $V\to N$, respectively.

\begin{proposition}\label{bundle-identification} There exists a one-to-one correspondence between morphisms of supermanifolds
$f\colon \mathbf{M}\to \mathbf{N}$, which cover a base map $f_{\sst 0}\colon M\to N$, and super bundle morphisms
$\tilde f\colon \Pi W\to \Pi(\tau_k^*V)$ such that the following diagram becomes commutative

  \beq
    \xymatrix{ & \Pi V \ar[r]& N\\
    \Pi W \ar[ur]^f\ar[r]_{\tilde f}\ar[d] & \tau_k^*\!(\Pi V) \ar[d]^{\sigma_k}\ar[u]_{\gamma_k} \ar[r] & N^{(k)}\ar[u]_{\tau_k} \\
               M \ar[r]_{f_0} & N&&  }
  \eeq

\noindent Here $\tau_k^*\!(\Pi V)$ is the pull-back of $\Pi V$ on $N^{\sst (k)}$
and $\gamma_k$ is the natural projection $\tau_k^*\!(\Pi V)\to \Pi V$ over $\tau_k\colon N^{\sst (k)}\to N$.
\end{proposition}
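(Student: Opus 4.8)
The plan is to prove the correspondence by dualizing to structure sheaves and exploiting the nilpotency of the odd directions of $\mathbf M=\Pi W$. A morphism $f\colon\mathbf M\to\mathbf N$ covering $f_{\sst 0}$ is the same datum as a morphism of sheaves of graded-commutative algebras $f^*\colon\Lambda^\bullet\cV^*\to f_{0*}\Lambda^\bullet\cW^*$ lying over $f_{\sst 0}^*\colon\cO_N\to f_{0*}\cO_M$. Since $\rk W=k$, the augmentation ideal $\cI\subset\Lambda^\bullet\cW^*$ generated by $\cW^*$ satisfies $\cI^{k+1}=0$; in particular every even nilpotent function on $\mathbf M$ has vanishing components of $\theta$-degree $>k$. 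First I would record this nilpotency bound as the structural input that forces the factorization through $k$-jets.

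Then I would analyze the even part of $f^*$. Writing $\de y^a:=f^*(y^a)-f_{\sst 0}^*(y^a)$ in local coordinates $y^a$ on $N$, each $\de y^a$ is an even section of $\cI$, and for any local $g\in\cO_N$ one has $f^*(g)=\sum_{|\mu|=0}^{k}\frac{1}{\mu!}\,\pr^\mu g(f_{\sst 0})\,\de y^\mu$, the sum truncating automatically because $\cI^{k+1}=0$. Hence $f^*|_{\cO_N}$ depends only on the $k$-jet of $g$ at $f_{\sst 0}$, so it descends along $j^k$: there is a unique morphism $\beta\colon\mathbf M\to N^{\sst(k)}$ with $\beta^*\circ j^k=f^*|_{\cO_N}$ and $\sigma_k\circ\beta=f_{\sst 0}$ (the latter because $\beta^*\circ i^k$ returns the $\de y$-independent part $f_{\sst 0}^*$). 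This $\beta$ is the jet prolongation of the even part of $f$, and it is the device that absorbs the non-linear even corrections of $f$ into the thickened base $N^{\sst(k)}$.

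Next I would assemble $\beta$ with the odd part. As $\Lambda^\bullet\cV^*$ is freely generated over $\cO_N$ by $\cV^*$, the structure sheaf of $\tau_k^*(\Pi V)$ is $\cJ^k_N\otimes_{\cO_N}\Lambda^\bullet\cV^*$ with the module structure taken through $\tau_k^*=j^k$, and $\gamma_k^*$ sends $y^a\mapsto j^k(y^a)$, $\eta^i\mapsto\tilde\eta^i$. The pair $(\beta,f)$ satisfies $\tau_k\circ\beta=\pi\circ f$ (both pull $g\in\cO_N$ back to $f^*|_{\cO_N}(g)$), where $\pi\colon\Pi V\to N$ is the projection; by the universal property of the pull-back $\tau_k^*(\Pi V)=N^{\sst(k)}\times_N\Pi V$ it therefore determines a unique morphism $\tilde f\colon\Pi W\to\tau_k^*(\Pi V)$ with $\gamma_k\circ\tilde f=f$ and $\sigma_k\circ\tilde f=f_{\sst 0}$, making the displayed diagram commute. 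Concretely $\tilde f^*$ routes the base generators through $\beta^*$ and sends $\tilde\eta^i\mapsto f^*(\eta^i)$.

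Conversely, any $\tilde f$ making the diagram commute yields $f:=\gamma_k\circ\tilde f$ covering $f_{\sst 0}$; its base component $\beta'\colon\mathbf M\to N^{\sst(k)}$ satisfies $\sigma_k\circ\beta'=f_{\sst 0}$ and, since $\gamma_k^*=j^k$ on $\cO_N$, also $\beta'^*\circ j^k=f^*|_{\cO_N}=\beta^*\circ j^k$. As $\cJ^k_N$ is generated as an $\cO_N$-algebra, through $i^k=\sigma_k^*$, by the image of $j^k=\tau_k^*$, the agreement of $\beta'$ and $\beta$ on both $i^k(\cO_N)$ and $j^k(\cO_N)$ forces $\beta'=\beta$; hence $f\mapsto\tilde f$ and $\tilde f\mapsto\gamma_k\circ\tilde f$ are mutually inverse. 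The main obstacle, and the step deserving the most care, is the well-definedness and naturality of the jet prolongation $\beta$: one must verify that $f^*|_{\cO_N}$ genuinely kills every $g$ whose $k$-jet at $f_{\sst 0}$ vanishes, which is exactly where the bound $\cI^{k+1}=0$ coming from $\rk W=k$ enters, and that this descent is coordinate-independent. I would arrange the latter by phrasing the whole construction through the sheaf maps $i^k=\sigma_k^*$ and $j^k=\tau_k^*$ rather than in coordinates, so that no explicit gluing is required; the remaining checks (that $\tilde f$ respects the grading and that the correspondence is bijective) are then formal consequences of the universal property of the fibre product together with $\Lambda^\bullet\cV^*$ being freely generated by $\cV^*$.
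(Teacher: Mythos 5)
Your proof is correct and is essentially the paper's own argument in dualized, sheaf-theoretic language: the paper forms the auxiliary morphism $f'\times f\colon \mathbf{M}\to N\times\mathbf{N}$ (with $f'$ the zero-on-fibers lift of $f_0$) and shows, by the same Taylor-expansion-plus-nilpotency computation ($(\theta)^\mu=0$ for $|\mu|>k=\rk W$), that its image lies in $\Pi(\tau_k^*V)$ --- which is precisely your factorization $\beta$ through $N^{(k)}$ glued to $f$ by the universal property of the pull-back. Both treatments regard the converse direction as tautological, so your proposal matches the paper's proof in substance.
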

\proof In one direction, we lift $f_0$ to a morphism $f'\colon \mathbf{M}\to N$ (as zero on $W-$fibers).
Let us consider the super morphism $f'\times f\colon \mathbf{M}\to N\times \mathbf{N}$. By definition, this super morphism fits to the following commutative diagram:

  \beq
    \xymatrix{ & \Pi V & \\
    \Pi W \ar[ur]^f\ar[r]_{f'\times f}\ar[d] & N\times\Pi V \ar[d]_{\sigma}\ar[u]_{\gamma}  \\
               M \ar[r]_{f_0} & N&&  }
  \eeq

\noindent It is clear that $N\times\Pi V$ is diffeomorphic to $\Pi(\tau^*V)$, where $\tau$ is the projection
of $N\times N$ onto the second factor (as before), $\sigma$ and $\gamma$ are the projections of $N\times\Pi V$ onto
$N$ and $\Pi V$, respectively. The proof of Proposition \ref{bundle-identification} follows from the next Lemma.

\begin{lemma} The image of $f'\times f$ belongs to $\Pi(\tau_k^*V)$, where $k=\rk W$.
\end{lemma}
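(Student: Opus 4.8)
The plan is to translate the assertion into a statement about the comorphism and then reduce it to the nilpotency of the odd coordinates of $\Pi W$. First I would recall, exactly as in $2.2$ with $N$ in place of $M$, that the structure sheaf of the $k$-th order neighborhood is $\cO_{N\times N}/\mathpzc{m}_N^{k+1}$, where $\mathpzc{m}_N$ is the ideal of functions vanishing on the diagonal. Under the diffeomorphism $N\times\Pi V\cong\Pi(\tau^*V)$, the sub-supermanifold $\Pi(\tau_k^*V)\hookrightarrow\Pi(\tau^*V)$ corresponds, on structure sheaves, to the quotient of $\Gamma(\Lambda^\bullet(\tau^*V)^*)$ by the ideal generated by $\mathpzc{m}_N^{k+1}$. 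Hence ``the image of $f'\times f$ lies in $\Pi(\tau_k^*V)$'' is equivalent to the purely algebraic condition that the comorphism $(f'\times f)^*\colon\Gamma(\Lambda^\bullet(\tau^*V)^*)\to\Gamma(\Lambda^\bullet W^*)$ annihilate that ideal. Because $(f'\times f)^*$ is a homomorphism of graded algebras, it is enough to prove that $(f'\times f)^*(\mathpzc{m}_N)^{k+1}=0$.

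Next I would isolate the one geometric input. Since both $f'$ and $f$ cover $f_0$, the body of $f'\times f$ is $(f_0,f_0)\colon M\to N\times N$, which lands in the diagonal. Consequently every $r\in\mathpzc{m}_N$ pulls back to a function with vanishing body, i.e. $(f'\times f)^*(r)$ lies in the ideal $\mathfrak n\subset\Gamma(\Lambda^\bullet W^*)$ generated by the odd fibre coordinates of $\Pi W$. In local coordinates $\{u^\alpha\}$ on the first factor and $\{y^\alpha\}$ on the second, $\mathpzc{m}_N$ is generated by the differences $y^\alpha-u^\alpha$, and indeed $(f'\times f)^*(y^\alpha-u^\alpha)=f^*(y^\alpha)-f_0^\alpha$ has zero body because $f$ covers $f_0$ while $f'$ is the zero lift; being even with vanishing body, it actually lies in $\mathfrak n^2$, though the containment in $\mathfrak n$ is all that is needed.

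The conclusion is then immediate from nilpotency. As $W$ has rank $k=\rk W$, the exterior algebra $\Lambda^\bullet W^*$ is generated by $k$ odd elements, so $\mathfrak n^{k+1}=0$. Combining the two previous steps, $(f'\times f)^*(\mathpzc{m}_N)^{k+1}\subseteq\mathfrak n^{k+1}=0$, which is precisely the annihilation required; thus $(f'\times f)^*$ descends to $\cO_{N\times N}/\mathpzc{m}_N^{k+1}$ and $f'\times f$ factors through $\Pi(\tau_k^*V)$.

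The hard part is not any computation but the bookkeeping of the first step: one must verify that the embedding $\Pi(\tau_k^*V)\hookrightarrow\Pi(\tau^*V)$ is cut out by the ideal generated by $\mathpzc{m}_N^{k+1}$ inside the full, odd-extended structure sheaf, and that reducing to the pullback of $\mathpzc{m}_N$ alone is legitimate. This last point relies on the odd fibre directions of the target being pulled back along $\tau$ from the second factor, so that they do not enter the diagonal ideal and the algebra-homomorphism property of $(f'\times f)^*$ lets the $(k+1)$-st power factor out. Once this identification is in place, the statement is a one-line consequence of the fact that $\rk W$ bounds the nilpotency order of the structure sheaf of $\Pi W$.
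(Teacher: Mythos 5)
Your proof is correct, and it turns on exactly the same mechanism as the paper's: the structure sheaf of $\Pi W$ has only $k=\rk W$ odd generators, so the ideal $\mathfrak{n}$ of sections with vanishing body satisfies $\mathfrak{n}^{k+1}=0$. The organization, however, is genuinely different. The paper sets $V=0$ ``for simplicity'', chooses local coordinates, and Taylor-expands $(f'\times f)^*(F)$ for $F\in\mathpzc{m}_N^{k+1}$: every term of the expansion carries either a derivative of $F$ of order $\le k$ evaluated on the diagonal (zero by hypothesis) or a monomial in more than $k$ odd coordinates (zero by nilpotency); the case of general $V$ is then declared a straightforward generalization. You instead argue ideal-theoretically and coordinate-free: $\Pi(\tau_k^*V)\hookrightarrow N\times\Pi V$ is cut out by the ideal generated by $\mathpzc{m}_N^{k+1}$; the body of $f'\times f$ is the diagonal map $(f_0,f_0)$ because both factors cover $f_0$, hence $(f'\times f)^*\mathpzc{m}_N\subseteq\mathfrak{n}$, and therefore $(f'\times f)^*(\mathpzc{m}_N^{k+1})\subseteq\mathfrak{n}^{k+1}=0$, which is precisely the condition for the morphism to factor. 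What your route buys: no coordinates, no Taylor expansion, and no reduction to $V=0$ --- since the odd directions of the target are pulled back from the second factor and never enter the diagonal ideal, the general case is no harder than the special one, so on this point your write-up is actually more complete than the paper's. What the paper's computation buys is the explicit component formula (the pairing of the superfield coefficients $f^j_{ab}$ with derivatives of $F$), which is the concrete expansion underlying the identification of morphisms with sections of the bundle in Proposition \ref{space_of_fields}. Both arguments are sound; yours is the tighter formal skeleton of the same nilpotency argument.
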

\proof The proof of the Lemma is a straightforward generalization of the next local observation.
Let us assume for simplicity that $V=0$.
Then $\Pi(\tau_k^*V)$ is just the $k$th order neighborhood of the diagonal $M^{\sst (k)}$ the structure sheaf of which over an open $U\subset M$ is the quotient $\cO_{U\times U}$ by the ideals of functions on $U\times U$ vanishing on the diagonal up to the $k-$th order.  We have to show that $(f'\times f)^* (F)$ for any such $F$ is zero. Indeed, if $f$ is given by
 \beq
  x^i =f_0^i (y) + \frac{1}{2}\sum\limits_{a,b}f^i_{ab}(y)\theta^a\theta^b +\ldots\,
 \eeq
where $y$ and $\theta$ are local even and odd coordinates of $\Pi W$, respectively, then
 \beq
  (f'\times f)^* (F) (y,\theta)=F(f(y),f(y))+\frac{1}{2}\sum\limits_j\pr_{z^j} F(f(y),z)_{\mid z=f(y)}
  f^j_{ab}(y)\theta^a\theta^b +\ldots\,.
 \eeq
On the other hand $(\theta)^\mu=0$ for each multi-index of length greater than $k$.
Therefore, if $F$ vanishes on the diagonal up to the $k-$th order
its image is identically zero. $\square$

\vskip 2mm \noindent In the opposite direction, the statement is tautologically trivial. $\square$

\vskip 3mm \npa As a corollary of Proposition \ref{bundle-identification} and using Proposition \ref{k-identification} we conclude that morphisms of supermanifolds covering $f_{0}$ are in one to one correspondence with section of $(\ref{result1})$ of even degree.

Thus ``superfields", elements of $\uHom(\mathbf{M},\mathbf{N})$, are in one-to-one correspondence with all sections of (\ref{space_of_fields}) where we consider the projective limit over $k$
-i.e. the fields become graded.\footnote{Generally, $V$ and $W$ should be also $\Z-$graded.}

\section{Global aspects}\label{second-paragraph}

 We sketchy review the theory of infinite-dimensional manifold structure for the mapping space $\Hom_{{\bf Man}}(M,N)$ following  $\cite{Mic},\cite{Mic1}$. In particular we review the $\mathfrak{D}$-topology and its refinement $\mathfrak{D}^{\infty}$-topology, and use it to recall that $\Hom(M,N)$ is a smooth manifold modelled on spaces $\mathfrak{D}(F)$ of smooth sections with compact support of vector bundles $F$ over $M$. A particular type of infinite-dimensional vector bundle over $\Hom(M,N)$ is introduced. Then the proof of Proposition $\ref{infinite vector bundle}$ follows. The last proposition describes the link between the categorical definition of $\uHom(\mathbf{M},\mathbf{N})$ and our construction.

\vskip1cm
\npa {\em Topology}\label{topology}. For each integer $n\in \mathbb{N}$, let $J^{n}(M,N)$ be the smooth fibre bundle of $n$-jets of maps from $M$ to $N$. For each $f\in \Hom(M,N) $ let  $j_{n}f:M\to J^{n}(M,N)$ being a smooth section, where $j_{n}f(x)$ is the-$n$-jet of $f$ at $x\in M$.
\begin{deff}(\cite{Mic}) Let $K=(K_{n}),\;n\in \mathbb{N}$ be a fixed sequence of compact subset of $M$ such that
$$K_{0}=\emptyset,\;K_{n-1}\subset K_{n}^{\circ},\; \forall n\;\;\;X=\bigcup_{n}K_{n}$$
($K_{n}^{\circ}$ open interior of $K_{n}$).
Then consider sequences $m=(m_{n}), U=(U_{n})$ for $n=0,1,...$ such that $m_{n}$ is non-negative integer and $U_{n}$ is open in $J^{m_{n}}(M,N)$. For each such pair $(m,U)$ of sequences define a set ${{O}}(m,U)\subset \Hom(M,N)$
$${O}(m,U)=\{f\in \Hom(M,N)|j^{m_{n}}f(K_{n}^{\circ})\subset U_{n}\;\;\;\forall n \in \mathbb{N} .\}$$
The $\mathfrak{D}$-topology on $\Hom(M,N)$ is given by taking all sets ${O}(m,U)$ as a basis for its open sets.
\end{deff}
The $\mathfrak{D}$-topology is {\em finer} than the Whitney-$C^{\infty}$ topology ($W^{\infty}$-topology). The $W^{\infty}$-topology is defined by $$W^{\infty}=\bigcup_{k=0}^{\infty}W^{k},$$ where each $W^{k}$ has the following properties:

1) A basis for open sets is given by all sets of the form $W^{k}(U)=\{g\in C^{k}(M,N), j^{k}g(M)\subseteq U \}$, where $U$ is open in $J^{k}(M,N)$.

2) If $d_{k}$ is a metric on $J^{k}(M,N)$\footnote{completely metrizable space} ($0 \leq k \leq \infty$) generating the topology, and if $f\in C^{k}(M,N)$, then the following is a neighborhood basis for $f$ in the $W^{k}$-topology:
$$N(f,k,\epsilon):=\{g\in C^{r}(M,N):\;\;d_{k}(j^{k}g(x),j^{k}f(x))<\epsilon(x),\;\;\forall x\in M\}$$
where $\epsilon \in C(M,]0,\infty[)$.

3)A sequence $f_n$ in $C^{r}(M,N)$ converges to $f\in C^{r}(M,N)$ in $W^{k}$-topology iff there exists a compact set $K\subseteq M$ such that $f_{n}$ equals $f$ off $K$ for all but finitely many $n$'s and $j^{k}f_{n}\to j^{k}f$ uniformly on $K$.

At point $2)$ by the use of the metric $d_k$, the concept of ``$k$-uniform convergence" has been introduced.
Observe that if $f_{n}\to f$ in the $\mathfrak{D}$-topology then $f_{n}\to f$ in the $W$-topology, in particular:

\begin{lemma}\cite{Mic} A sequence $(f_{n})\in \Hom(M,N)$ converges in the $\mathfrak{D}$-topology to $f$ iff there exists a compact set $K\subset M$ such that all but a finitely many of the $f_{n}$'s equal $f$ off $K$ and $j^{l}f_{n}\to j^{l}f$ uniformly on $K$, for all $l\in \mathbb{N}$.
\end{lemma}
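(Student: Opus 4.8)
The plan is to prove the two implications separately. The forward implication can be extracted almost entirely from facts already recorded above, so the real work will sit in the reverse implication, where one must confront the basic $\mathfrak{D}$-open sets $O(m,U)$ directly.

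For the forward direction, suppose $f_{n}\to f$ in the $\mathfrak{D}$-topology. Since this topology is finer than the Whitney $W^{\infty}$-topology, and $W^{\infty}$ in turn refines each $W^{k}$, convergence in $\mathfrak{D}$ forces $f_{n}\to f$ in $W^{k}$ for every $k$. Applying the $W^{k}$-convergence criterion (property 3) at $k=0$ produces a single compact set $K\subset M$ such that all but finitely many $f_{n}$ coincide with $f$ on $M\setminus K$; this is exactly the compact set demanded in the statement. It remains to promote uniform convergence of jets to this fixed $K$ and to every order. For each $l$, property 3 yields some compact $K_{(l)}$ on which $j^{l}f_{n}\to j^{l}f$ uniformly. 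Enlarging $K_{(l)}$ to $K\cup K_{(l)}$ is harmless: on the added piece $K\setminus K_{(l)}$ the maps $f_{n}$ eventually equal $f$, so their $l$-jets agree there and the convergence is trivially uniform. Restricting to $K$ gives $j^{l}f_{n}\to j^{l}f$ uniformly on $K$ for all $l\in\mathbb{N}$, completing this direction.

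For the reverse direction, assume $f_{n}=f$ on $M\setminus K$ for $n\ge N_{1}$ and $j^{l}f_{n}\to j^{l}f$ uniformly on $K$ for all $l$. Fix a basic neighborhood $O(m,U)\ni f$ and argue by contradiction that a subsequence escapes it: there are indices $p_{k}$ and points $x_{k}\in K_{p_{k}}^{\circ}$ with $j^{m_{p_{k}}}f_{n_{k}}(x_{k})\notin U_{p_{k}}$. The first step is to locate the offending points. For $k$ large, $x_{k}$ must lie in $K$, since off $K$ the jets of $f_{n_{k}}$ and $f$ coincide while $j^{m_{p_{k}}}f(K_{p_{k}}^{\circ})\subseteq U_{p_{k}}$ (as $f\in O(m,U)$), ruling out a bad point off $K$. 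Thus all offending points sit in the single compact $K$, and by compactness one may extract $x_{k}\to x_{\ast}\in K$; the aim is then to contradict $j^{m_{p_{k}}}f_{n_{k}}(x_{k})\notin U_{p_{k}}$ against $j^{m_{p_{k}}}f(x_{k})\in U_{p_{k}}$ using the uniform convergence of jets on $K$.

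The main obstacle is precisely this last step. The indices $p_{k}$, and with them the jet orders $m_{p_{k}}$ and the open sets $U_{p_{k}}$, need not be bounded, whereas the hypothesis supplies uniform convergence $j^{l}f_{n}\to j^{l}f$ only order by order, with no control uniform in $l$; one therefore cannot naively take a margin around the compact set $j^{m_{p_{k}}}f(K)\subseteq U_{p_{k}}$ and pass to the limit, and indeed some such uniformity is genuinely needed. The decisive point, and the heart of Michor's argument \cite{Mic}, is to show that only finitely many of the conditions defining $O(m,U)$ are genuinely restrictive on the tail of the sequence: because the support of $f_{n}-f$ is trapped in the fixed compact $K$, which interacts with the exhaustion $(K_{n})$ only through finitely many of its members, the relevant jet orders are effectively bounded. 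Making this localization precise in the exact formulation of the $\mathfrak{D}$-topology (and, where necessary, its refinement $\mathfrak{D}^{\infty}$) collapses the infinite family of constraints to finitely many fixed orders, on each of which the order-by-order uniform convergence on $K$ closes the argument.
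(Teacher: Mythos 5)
First, note that this paper offers no proof of the lemma at all: it is quoted verbatim from Michor \cite{Mic}, so your attempt can only be judged on its own terms. Your forward direction is sound: the $\mathfrak{D}$-topology refines $W^{\infty}$ and hence every $W^{k}$, property 3 at order $0$ supplies the compact set $K$, and your patching step is valid because $f_{n}=f$ on the \emph{open} set $M\setminus K_{(l)}$ forces $j^{l}f_{n}=j^{l}f$ there, so uniform convergence on $K\cap K_{(l)}$ upgrades to uniform convergence on all of $K$.

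The reverse direction, however, contains a genuine gap, and it sits exactly where you deferred to ``the heart of Michor's argument.'' Your key claim --- that only finitely many of the conditions defining $O(m,U)$ genuinely restrict the tail of the sequence because $K$ ``interacts with the exhaustion $(K_{n})$ only through finitely many of its members'' --- is false for the definition of $O(m,U)$ as transcribed in this paper. There the $n$-th condition reads $j^{m_{n}}f(K_{n}^{\circ})\subset U_{n}$ on the \emph{increasing} sets $K_{n}^{\circ}$; since $K\subset K_{n}^{\circ}$ for all large $n$, \emph{every} one of the infinitely many conditions constrains the jets on $K$, and in fact the reverse implication of the lemma fails outright for that topology: take $M=N=\mathbb{R}$, $K_{n}=[-n,n]$, $f=0$, $f_{k}=\phi/k$ with $\phi$ a bump function supported in $[0,1]$, and choose $m_{n}=0$, $U_{n}=((\mathbb{R}\setminus[0,1])\times\mathbb{R})\cup\{(x,y):|y|<1/n\}$ inside $J^{0}(\mathbb{R},\mathbb{R})=\mathbb{R}^{2}$; then $O(m,U)$ is a basic neighborhood of $f$ containing no $f_{k}$ whatsoever, although $(f_{k})$ satisfies the right-hand side of the lemma. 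The statement is true only for Michor's actual definition, in which the constraining sets are the \emph{compact annuli} $K_{n}\setminus K_{n-1}^{\circ}$, which form a locally finite family. With that definition the two missing steps can both be carried out: (i) $K$ meets only finitely many annuli, so all but finitely many conditions hold automatically once $f_{k}=f$ off $K$; and (ii) for each of the finitely many remaining $n$, the margin problem you explicitly flagged but never solved disappears, because $j^{m_{n}}f(K_{n}\setminus K_{n-1}^{\circ})$ is a \emph{compact} subset of the open set $U_{n}$, hence lies at positive distance from its complement in the metric $d_{m_{n}}$, and uniform convergence of $j^{m_{n}}f_{k}$ on $K$ (plus exact agreement of jets off $K$) eventually places $j^{m_{n}}f_{k}(K_{n}\setminus K_{n-1}^{\circ})$ inside $U_{n}$. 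Without identifying this discrepancy in the definition and without the positive-distance argument, your second half remains a plan rather than a proof.
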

\begin{deff}\label{equivalence}Let $M,N$ as above, if $f,g\in \Hom(M,N)$ and the set
$$\{x\in M\;\; f(x)\neq g(x)\}$$
is relatively compact in $M$, we call $f$ equivalent to $g$, $f\sim g$.
\end{deff}
This is clearly an equivalence relation. The $\mathfrak{D}^{\infty}$-topology on the set $\Hom(M,N)$ is now the weakest among all topologies on $\Hom(M,N)$ which are finer than $\mathfrak{D}$-topology and for which all equivalence classes of the above relation are open.

\vskip1cm
\npa {\em local model}. For each $f\in \Hom(M,N)$ define $\Gamma_{f}\subset M \times N $ the graph of $f$, and $\pi_{f}:E(f)\to \Gamma_{f}$
the vector bundle over $\Gamma_{f}$, given by (essentially pullback of the vector bundle $TN$ along $f$)
$$E(f)=\cup_{M} T_{(x,f(x))}(\{x\}\times N)\subset T_{\Gamma_{f}}(M\times N)$$
(with $T_{(x,f(x))}(\{x\}\times N)=\{0\}_{x}\times T_{f(x)}(N)$ and $\{0\}_{x}$ the zero section of $TM$ over $x$).
$E(f)$ is a realization of the normal bundle to $T_{\Gamma{f}}(\Gamma_{f})$ in $T_{\Gamma{f}}(M\times N)$.
Now let
$$exp^{N}:V\subset T(N)\to N $$
be exponential map associated to a torsion free connection, defined on a neighborhood $V$ of the zero section in $TN$. The exponential map gives a diffeomorphism of
$E(f)$ onto an open neighborhood $Z_{f}$ of $\Gamma_{f}$ in $M\times N$, given by
$$(0,v_{x})\in E(f)\to (x,exp_{f(x)}^{N}(v_{x}))\in Z_{f}\subset M\times N,$$
if $v_{x}\in V$.
It is clear that $\rho_f :Z_{f}\to \Gamma_{f}$ is a vector bundle with vertical projection\footnote{actually it is a {\em tubular neighborhood} of $\Gamma_{f}$ in $M\times N$}, i.e. $\rho_f(x,y)=(x,f(x))\;\;\forall (x,y)\in Z_{f}$. We will denote with $C^{\infty}(Z_{f})$ the set of sections of $(Z_{f},\rho_f,\Gamma_{f})$.
The exponential map gives a fibre preserving diffeomorphism $\tau_f:E(f)\to Z_f$

\beq
    \xymatrix{ E(f)\ar@{->}^-{\tau_f}[r]\ar@{->}_-{\pi_f}[dr]&\ar@{->}^-{\rho_f}[d]Z_f \\
      & \Gamma_f .\\ }
  \eeq

For $f\in \Hom(M,N)$ we define the set $U_{f}$ given by
$$U_{f}=\{h\in \Hom(M,N)| \Gamma_{h}\subset Z_{f},\;\;h \sim f \}.$$
In particular {\em each $U_{f}$ is open in the $\mathfrak{D}^{\infty}$-topology.}

In words: $h$ is in the open neighborhood of $f$ iff the corresponding graph $\Gamma_{h}$ is contained in $Z_{f}$ and $f\sim h$ according to definition $\ref{equivalence}$.

 The space of sections of the vector bundle $\pi_f:(E_{f})\to \Gamma_f$, with compact support endowed with $\mathfrak{D}^{\infty}$-topology, is a locally convex topological linear space (\cite{Mic} pag. $57$) .
It will be denoted as $\mathfrak{D}(E_{f})$.
\vskip1cm
\npa{\em smoooth structure}\label{smooth-structure}. We declare $U_{f}$ being a local chart for $f$ with the coordinate mapping $\phi_{f}:U_{f}\to \mathfrak{D}(E_{f})$
$$\phi_{f}(g)=\tau_f^{-1}(x,g(x))\;\;g\in U_{f},\;\;(x,f(x))\in X_{f}$$
and
$\psi_{f}:\mathfrak{D}(E_{f})\to U_{f}$ defined by
$$\psi_{f}(s)(x)=\pi_{N}\circ\tau_{f}\circ s(x,f(x)),\;\;s\in (\mathfrak{D}(E_{f})),x\in M,$$
with $\pi_{N}:M\times N\to N$ the canonical projection.
The maps $\psi_f$ and $\phi_f$ are inverse to each other and the chart change for $s\in \phi_{g}(U_g\cap U_f)$ is defined by
$$\phi_f\circ \psi_g(s)(x,f(x))=\tau_f^{-1}\circ \tau_g\circ s(x,g(x)).$$

\begin{theorem}\cite{Mic} $\Hom(M,N)$ is a smooth manifold with smooth atlas $(U_f,\psi_f)_{f\in \Hom(M,N)}$.
\end{theorem}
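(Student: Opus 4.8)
\proof
The plan is to verify directly that $(U_f,\psi_f)_{f\in\Hom(M,N)}$ is a smooth atlas on $\Hom(M,N)$ modelled on the locally convex spaces $\mathfrak{D}(E_f)$, smoothness being understood in the sense of the convenient calculus of global analysis (\cite{Mic},\cite{Mic1}).

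First I would dispose of the elementary atlas conditions. The charts cover $\Hom(M,N)$, since each $f$ lies in $U_f$: the graph $\Gamma_f$ is the image of the zero section of $E(f)$ under $\tau_f$, whence $\Gamma_f\subset Z_f$, and trivially $f\sim f$. That each $U_f$ is open in the $\mathfrak{D}^{\infty}$-topology has already been recorded above. It then remains to identify the image of $\phi_f$: a compactly supported section $s\in\mathfrak{D}(E_f)$ lies in $\phi_f(U_f)$ exactly when its values stay inside the open subset $V\cap E(f)$ of the total space on which $\exp^{N}$ is a diffeomorphism, and this is an open condition in the $\mathfrak{D}^{\infty}$-topology. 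The explicit formulas for $\phi_f$ and $\psi_f$ then exhibit them as mutually inverse homeomorphisms onto this open set.

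The heart of the argument is the smoothness of the chart changes. By the formula recorded above,
$$\phi_f\circ\psi_g(s)(x,f(x))=\tau_f^{-1}\circ\tau_g\circ s(x,g(x)),$$
so the transition is nothing but the push-forward operator induced by the fibre-preserving smooth diffeomorphism $\tau_f^{-1}\circ\tau_g$ between open subsets of the total spaces, once both $\Gamma_f$ and $\Gamma_g$ are identified with $M$ through the first projection; indeed $\tau_f^{-1}\circ\tau_g$ covers $\mathrm{id}_M$ and carries the fibre of $E(g)$ over $x$ into the fibre of $E(f)$ over $x$. The essential functional-analytic input is the $\Omega$-lemma of \cite{Mic}: post-composition with a fibre-respecting smooth map between open subsets of vector bundles over $M$ induces a smooth map between the corresponding spaces of compactly supported sections in the $\mathfrak{D}^{\infty}$-topology. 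Applied to $\tau_f^{-1}\circ\tau_g$, this yields smoothness of $\phi_f\circ\psi_g$ on the $\mathfrak{D}^{\infty}$-open set $\phi_g(U_f\cap U_g)$, and the same argument with $f$ and $g$ interchanged gives smoothness of the inverse.

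The $\Omega$-lemma in turn rests on the Cartesian closedness (exponential law) of the category of smooth maps between convenient vector spaces, which permits one to treat the section variable $s$ and the base point $x$ on an equal footing and so to differentiate the composite $\tau_f^{-1}\circ\tau_g\circ s$ in $s$ by pointwise operations; what makes it nontrivial is the bookkeeping of compact supports and the precise description of the $\mathfrak{D}^{\infty}$-topology under push-forward. Together with the Hausdorffness of $\Hom(M,N)$ in the $\mathfrak{D}^{\infty}$-topology, this produces the smooth manifold structure. I expect the single genuine obstacle to be exactly this smoothness of the transitions---checking the hypotheses of the $\Omega$-lemma and the behaviour of supports---after which the remaining atlas axioms are purely formal.
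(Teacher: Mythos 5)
The paper itself gives no proof of this theorem: it is recalled verbatim from Michor \cite{Mic}, with only the chart data $(U_f,\phi_f,\psi_f)$ set up beforehand. Your sketch is a correct reconstruction of exactly that cited argument---covering and openness of the $U_f$ in the $\mathfrak{D}^{\infty}$-topology, identification of $\phi_f(U_f)$ as an open set of compactly supported sections, and smoothness of the transition maps $\phi_f\circ\psi_g$ as push-forward by the fibre-respecting diffeomorphism $\tau_f^{-1}\circ\tau_g$ via Michor's $\Omega$-lemma---so it takes essentially the same approach as the paper's source.
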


Another useful theorem is the following
\begin{theorem}\cite{Mic2}\label{Mic2} The infinite dimensional smooth vector bundle $T\Hom(M,N)$ (``tangent bundle of $\Hom(M,N)$") and the space of sections with compact support of the pullback vector bundle $C_c^{\infty}(M,f^{\ast} TN),\;\;\forall f\in \Hom(M,N),$ are canonically isomorphic.
\end{theorem}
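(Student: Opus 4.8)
The plan is to read off both the fibres and the local triviality of $T\Hom(M,N)$ from the atlas $(U_f,\phi_f)_{f\in\Hom(M,N)}$ constructed above, and to exhibit the isomorphism through the \emph{variation field} of a curve, a construction that uses neither a chart nor the auxiliary connection and is therefore manifestly canonical. For a manifold modelled on the locally convex spaces $\mathfrak{D}(E_f)$, a tangent vector at a point of $U_f$ is identified, via the chart $\phi_f$, with the model space $\mathfrak{D}(E_f)$. The first observation is that this model space is precisely $C_c^\infty(M,f^{\ast} TN)$: the graph projection $\Gamma_f\to M$, $(x,f(x))\mapsto x$, is a diffeomorphism under which the fibre $E(f)_{(x,f(x))}=\{0\}_x\times T_{f(x)}N$ becomes $T_{f(x)}N=(f^{\ast} TN)_x$, so that $E(f)\cong f^{\ast} TN$ over $M$ and hence $\mathfrak{D}(E_f)\cong C_c^\infty(M,f^{\ast} TN)$ as topological vector spaces.

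Next I would define the candidate map $\Xi\colon T\Hom(M,N)\to \coprod_{h} C_c^\infty(M,h^{\ast} TN)$ fibrewise. A tangent vector $X\in T_h\Hom(M,N)$ is represented by a smooth curve $c\colon(-\varepsilon,\varepsilon)\to\Hom(M,N)$ with $c(0)=h$; set
\[
 \xi_X(x):=\pt_t\big|_{t=0}\, c(t)(x)\in T_{h(x)}N=(h^{\ast} TN)_x .
\]
Because $c$ is smooth for the $\mathfrak{D}^{\infty}$-structure it lies, for small $t$, in the single chart $U_h$, so that $\phi_h\circ c$ is a smooth curve in the space $\mathfrak{D}(E_h)$ of compactly supported sections. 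Since $\phi_h(h)$ is the zero section and the fibre derivative of $\exp^N$ at the zero vector is the identity, one computes $\pt_t|_{0}(\phi_h\circ c)=\xi_X$; consequently $\xi_X$ is a well-defined element of $C_c^\infty(M,h^{\ast} TN)$, independent of the chosen curve, and $\Xi|_{T_h}\colon X\mapsto\xi_X$ is linear and bijective.

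It remains to check that $\Xi$ is a smooth isomorphism of infinite dimensional vector bundles, and I would do this chart by chart over each $U_f$. Writing $h=\psi_f(s)$ with $s=\phi_f(h)\in C_c^\infty(M,f^{\ast} TN)$, a point of $U_f$ is the map $h(x)=\exp^N_{f(x)}(s(x))$. If $T_h\phi_f(X)=w\in C_c^\infty(M,f^{\ast} TN)$, then differentiating $c(t)(x)=\exp^N_{f(x)}(\phi_f(c(t))(x))$ at $t=0$ gives the pointwise formula
\[
 \xi_X(x)=T_{s(x)}\exp^N_{f(x)}\big(w(x)\big).
\]
Thus, in the trivialisation $T\phi_f$ of $T\Hom(M,N)|_{U_f}$ and the corresponding trivialisation of $\coprod_h C_c^\infty(M,h^{\ast} TN)$, the map $\Xi$ reads $(s,w)\mapsto\big(s,\,T\exp^N(s)\,w\big)$, i.e. the pointwise application of the fibre derivative of $\exp^N$. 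Since $T_{s(x)}\exp^N_{f(x)}$ is a linear isomorphism whenever $s(x)$ lies in the domain of $\exp^N$, this is a fibrewise linear isomorphism; and it depends smoothly on $(s,w)$ by the smoothness of push-forward in the convenient calculus of $\cite{Mic},\cite{Mic1}$. As the $U_f$ cover $\Hom(M,N)$, $\Xi$ is a global isomorphism, canonical because its fibrewise definition through variation fields involves no choices.

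The main obstacle is this last step: proving that the pointwise operator $T\exp^N$ induces a genuinely \emph{smooth} map between the section spaces in the sense of the convenient calculus, and that the trivialisations so obtained are compatible on the overlaps $U_f\cap U_g$ — equivalently, that the tangent cocycle $T(\phi_f\circ\psi_g)$, obtained by differentiating the chart change $\tau_f^{-1}\circ\tau_g$ along the fibres, agrees with the canonical identification $f^{\ast} TN\cong g^{\ast} TN$ it induces pointwise. This is precisely where the global-analytic input (smoothness of push-forwards and control of supports in the $\mathfrak{D}^{\infty}$-topology) is required; the purely pointwise differential-geometric computations are routine.
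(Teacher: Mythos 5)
The paper contains no proof of this statement at all: the theorem is imported verbatim from Michor's book (\cite{Mic2}), so there is nothing internal to compare your argument against, and your proposal has to be judged as a reconstruction of the cited result. As such it is correct in outline, and it is in fact the standard route (essentially Michor's own): the identification $E(f)\cong f^{\ast}TN$ via the graph projection, whence $\mathfrak{D}(E_f)\cong C_c^{\infty}(M,f^{\ast}TN)$; the chart-free description of tangent vectors as variation fields $\xi_X(x)=\partial_t|_{t=0}\,c(t)(x)$, which makes canonicity manifest; and the chart computation $\xi_X(x)=T_{s(x)}\exp^N_{f(x)}(w(x))$, showing that in the chart $\phi_f$ your map $\Xi$ is the pointwise fibre derivative of $\exp^N$, a linear isomorphism because $\exp^N$ is a fibrewise diffeomorphism onto $Z_f$. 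These pointwise formulas are all correct.

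Two caveats. First, the step you defer --- that composition with a fibre-respecting smooth map induces a smooth map of section spaces in the $\mathfrak{D}^{\infty}$ setting (Michor's ``Omega-lemma''), together with compatibility of the tangent cocycle $T(\phi_f\circ\psi_g)$ with the pointwise identifications --- is the entire analytic content of the theorem, so your write-up is complete only modulo that lemma; invoking it is nevertheless legitimate here, since it is a standing result of the framework of \cite{Mic}, \cite{Mic1} that the paper already presupposes. Second, note that $\coprod_{h}C_c^{\infty}(M,h^{\ast}TN)$ carries no a priori smooth vector bundle structure, so calling $\Xi$ a smooth bundle isomorphism is premature until the target is made a bundle: either endow it with a structure first (e.g.\ by parallel transport, as the paper does for the more general bundles $\tilde V$ in Proposition~\ref{infinite-vb}) and then check smoothness of $\Xi$ for that structure, or take the trivialisations induced by $\Xi$ as the definition and verify the cocycle condition on overlaps --- your closing paragraph gestures at the second option, and saying so explicitly would remove the apparent circularity.
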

\vskip1cm
\npa {\em locally trivial vector bundles}. Let $(U_f,\psi_f)_{f\in \Hom(M,N)}$ be an atlas for $\Hom(M,N)$. Smooth curves in $U_f\subset \Hom(M,N)$  are just the images under $\psi_f$ of smooth sections of the bundle $pr_{2}^{\ast}E(f)\to \mathbb{R}\times M$, $pr_{2}:\mathbb{R}\times M \to M$, which have compact support in $M$ locally in $\mathbb{R}$.

Let $(V,\pi_{V},M)$ and $(W,\pi_{W},N)$ be two smooth vector bundles of finite rank over $M$ and $N$ respectively.
Define $(\tilde{V},\pi, \Hom(M,N))$ as the mapping space that to each $f:M\to N$ associates the space of sections $C^{\infty}(M,V\otimes f^{\ast}W)$ restricted to the graph of $f$ with compact support (endowed with the $\mathfrak{D}^{\infty}$-topology).
\begin{proposition}\label{infinite-vb}$(\tilde{V},\pi, \Hom(M,N))$ is a smooth vector bundle over $\Hom(M,N)$.
\end{proposition}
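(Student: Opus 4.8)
The plan is to construct an explicit vector bundle atlas for $\tilde V$ subordinate to the smooth atlas $(U_f,\psi_f)_{f\in\Hom(M,N)}$ of $\Hom(M,N)$, in exact parallel with the description of $T\Hom(M,N)$ in Theorem \ref{Mic2}. Over a chart $U_f$ the fibre of $\tilde V$ at a point $h$ is $\mathfrak{D}(V\ot h^*W)$, the space of compactly supported sections of $V\ot h^*W\to M$, and the task is to identify all these fibres, for $h\in U_f$, with a single model space $\mathfrak{D}(V\ot f^*W)$ in a manner depending smoothly on $h$.

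The identification is furnished by the tubular neighbourhood $Z_f$ and its vertical projection $\rho_f\colon Z_f\to\Gamma_f$. For $h\in U_f$ one has $\Gamma_h\subset Z_f$ and $\rho_f(x,h(x))=(x,f(x))$, so each point $(x,h(x))$ lies on the radial geodesic issuing from $(x,f(x))$ that defines the $\rho_f$-fibre through it. Fixing the torsion free connection on $N$ used to build $\exp^N$ together with an auxiliary linear connection $\na^{W}$ in $W$, parallel transport of $W$ along this geodesic from $h(x)$ to $f(x)$ yields a linear isomorphism $W_{h(x)}\xrightarrow{\sim}W_{f(x)}$ depending smoothly on $(x,h(x))$. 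Tensoring with the identity of $V_x$ produces a fibrewise linear isomorphism $P_h\colon V\ot h^*W\to V\ot f^*W$ of bundles over $M$ covering $\Id_M$, hence an isomorphism $\mathfrak{D}(V\ot h^*W)\xrightarrow{\sim}\mathfrak{D}(V\ot f^*W)$ on compactly supported sections (supports are preserved since $P_h$ covers the identity). We then set $\Psi_f(s)=(h,P_h\acts s)$ for a section $s$ over $\Gamma_h$; this is the candidate local trivialisation $\pi^{-1}(U_f)\to U_f\times\mathfrak{D}(V\ot f^*W)$, bijective with inverse given by inverse parallel transport, and linear on each fibre by construction.

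It then remains to verify that the transition maps form a smooth cocycle. Over $U_f\cap U_g$ the composite $\Psi_f\circ\Psi_g^{-1}$ sends $(h,v)$ to $(h,\,P_h^{f}\circ(P_h^{g})^{-1}\,v)$, which is linear in $v$; the content is smoothness of $h\mapsto P_h^{f}\circ(P_h^{g})^{-1}$ as a map into the space of bounded linear operators between the model spaces. Following Michor's convenient calculus I would test smoothness on curves: a smooth curve in $U_g$ is, by the description of smooth curves in the chart, the $\psi_g$-image of a compactly supported smooth section of $pr_2^*E(g)$ over $\R\times M$, while the geodesics and parallel transports defining $P_h^{f}$ and $P_h^{g}$ solve ordinary differential equations whose solutions depend smoothly on their initial data. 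Hence the operators act smoothly and preserve compact supports, so the composite carries smooth curves of sections to smooth curves of sections, which is exactly the required smoothness. The cocycle identity $\Psi_f\circ\Psi_h^{-1}=(\Psi_f\circ\Psi_g^{-1})\circ(\Psi_g\circ\Psi_h^{-1})$ holds because parallel transports compose, the radial geodesics being canonically fixed.

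The main obstacle is precisely this infinite-dimensional smoothness statement: one must control the $h$-dependence of the parallel transport operators uniformly on supports while respecting the $\mathfrak{D}^\infty$-topology, and one must account for the fact that distinct base points $f$ give formally distinct model fibres $\mathfrak{D}(V\ot f^*W)$. Both points are resolved exactly as in the proof of Theorem \ref{Mic2}: the model fibres are all canonically isomorphic through the trivialisations just constructed, and the smooth dependence on initial data of solutions to the geodesic and parallel transport equations upgrades to smoothness in the convenient sense on spaces of compactly supported sections. Finally, tensoring with the fixed bundle $V\to M$ is harmless, being an operation performed fibrewise over $M$, independent of $f$, and commuting with pullback along the base maps and with parallel transport in the $W$-factor; thus the whole construction reduces to the case of $V$ trivial of rank one, i.e. to the bundle $h\mapsto\mathfrak{D}(h^*W)$, handled verbatim as $T\Hom(M,N)$ with $TN$ replaced by $W$.
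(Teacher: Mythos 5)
Your proposal is correct and follows essentially the same route as the paper: a local trivialisation of $\tilde V$ over each chart $U_f$ obtained by parallel transport in $W$ (with respect to an auxiliary connection $\nabla_W$) along the radial geodesics $t\mapsto\exp^N_{f(x)}(t\,\eta(x))$ joining $f$ to $h=\psi_f(\eta)$, tensored with the identity on the fixed factor $V$ over $M$. The only difference is one of detail, not of method: the paper stops at exhibiting the trivialisation, whereas you additionally spell out the cocycle and convenient-calculus smoothness verifications, which the paper leaves implicit.
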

\begin{proof}The proof consists in the introduction of a local trivialization for $(\tilde{V},\pi, \Hom(M,N))$ adapted to the atlas $(U_f,\psi_f)_{f\in \Hom(M,N)}$. Besides the torsion free connection on $N$, we consider $W$ endowed with a connection, $\nabla_W$. For each $f \in \Hom(M,N)$ consider $\eta \in C^{\infty}(E(f))$ a smooth section with compact support of the pullback bundle $f^{\ast}TN$, and define the associated curve in $\Hom(M,N)$,
$$\gamma_{t}:=\psi_f(t\cdot\eta),\;t\in [0,1]\subset U_f,$$
$$\gamma_0=f,\;\;\gamma_{1}=g,$$
with $t\cdot\eta$ a section of $pr_{2}^{\ast}E(f)\to [0,1]\times M \subset \mathbb{R}\times M$ obtained by fiberwise multiplication, i.e. $t\cdot\eta(x)=t(\eta(x))$.
The parallel transport $\mathrm{P\;exp}(\gamma_t,\nabla_W,t)$ along the curve $\gamma_t$ induces an isomorphism between the spaces of sections $C^{\infty}(M,f^\ast W)$ and $C^{\infty}(M,g^\ast W)$.
Taking into account that the trivialization for $V$ in $\tilde{V}$ is obvious, we immediately get a trivialization of $\tilde{V}$ over $U_f$.
\end{proof}
\begin{rem} The previous result is just a generalization of the following result in differential geometry; given $(V,\pi,M)$ a vector bundle over $M$ with connection $\nabla_V$, and $U\subset M$ an open ball in $M$: parallel transport along rays in $U$ gives a smooth trivialization of $V$ over $U$, by identifying the fibres with the parallel transport map.
\end{rem}

\vskip 3mm \npa Follow the proof of Proposition \ref{infinite vector bundle}. Let $(U_{f},\psi_{f})_{f\in \Hom(M,N)}$ be a smooth atlas for $\Hom(M,N)$ as before; for each $g\in U_{f}$ consider the corresponding mapping space (typical fiber) given by the space of sections of $\widetilde{V}$ as in Proposition \ref{bundle-identification} $$\tilde{V}_{g}=C^{\infty}(M,S_{+}^{(k)} ( W^*[1])\otimes g^* (TN)\oplus S^{(k)} ( W^*[1])\otimes g^*(V[1])).$$ Since the hypothesis of Proposition $\ref{infinite-vb}$ for the data $(\tilde{V}_{g\in U_{f}},(U_{f},\psi_{f})_{f\in \Hom(M,N)})$ are verified, the result follows. $\square$

\vskip 3mm \npa
\begin{proposition}\label{categorical-property}The categorical property
\beq
\Hom({\mathbf{Z}\times \mathbf{M},\mathbf{N}})=\Hom(\mathbf{Z},\uHom(\mathbf{M},\mathbf{N})),\;\;\forall \mathbf{Z},\mathbf{M},\mathbf{N} \in {\mathrm{\bf SMan}}
\eeq
is verified.
\end{proposition}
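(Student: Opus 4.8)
The plan is to prove the adjunction by reducing it to the description of $\uHom(\mathbf{M},\mathbf{N})$ as the total space of the infinite-dimensional super-vector bundle $\widetilde V\to\Hom(M,N)$ constructed above, and by splitting the correspondence into its \emph{bosonic} (base) part, governed by Michor's exponential law for smooth mapping spaces, and its \emph{fermionic} part, governed by the algebraic identifications of Propositions \ref{bundle-identification} and \ref{k-identification}. Throughout I would invoke Batchelor's theorem to write $\mathbf{Z}=\Pi U$ for a vector bundle $U\to Z$, so that $\mathbf{Z}\times\mathbf{M}$ is super-diffeomorphic to $\Pi(\mathrm{pr}_Z^*U\oplus\mathrm{pr}_M^*W)$, a $\Pi$-bundle over $Z\times M$; this brings the left-hand side into the scope of Proposition \ref{bundle-identification}. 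The correspondence will be produced explicitly and shown to be natural, rather than inferred abstractly from a universal property.

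First I would unwind the right-hand side. By Proposition \ref{infinite vector bundle}, a morphism $g\colon\mathbf{Z}\to\uHom(\mathbf{M},\mathbf{N})$ amounts to a base map $g_0\colon Z\to\Hom(M,N)=C^\infty(M,N)$ together with a $\mathbf{Z}$-family of fibre data, namely a section along $g_0$ of the pull-back of $\widetilde V$ whose components are allowed to be odd because they are valued in $\cO_Z$. This is precisely the passage from the even sections, which classify honest morphisms $\mathbf{M}\to\mathbf{N}$ in Proposition \ref{space_of_fields}, to all sections, which classify the superfields. By the smooth Cartesian closedness of the category ${\bf Man}$ in the convenient calculus of \cite{Mic},\cite{Mic1}, the base datum $g_0\colon Z\to C^\infty(M,N)$ corresponds naturally and bijectively to a smooth map $\hat g_0\colon Z\times M\to N$, i.e. to the base map of a morphism $\mathbf{Z}\times\mathbf{M}\to\mathbf{N}$.

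Next I would unwind the left-hand side over the same base $\hat g_0$. Since $\cO_{\mathbf{Z}\times\mathbf{M}}=\cO_Z\mathbin{\hat\otimes}\cO_M$, applying Proposition \ref{bundle-identification} to $\mathbf{Z}\times\mathbf{M}=\Pi(\mathrm{pr}_Z^*U\oplus\mathrm{pr}_M^*W)$ and then Proposition \ref{k-identification} rewrites a morphism $F\colon\mathbf{Z}\times\mathbf{M}\to\mathbf{N}$ covering $\hat g_0$ as an even section of the super-bundle analogous to (\ref{result1}) over $Z\times M$, where the role of $W$ is played by $\mathrm{pr}_Z^*U\oplus\mathrm{pr}_M^*W$. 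Re-expanding this section along the tensor factor $\cO_Z$ and splitting the symmetric algebra of $(\mathrm{pr}_Z^*U\oplus\mathrm{pr}_M^*W)^*[1]$ into its $Z$- and $M$-directions regroups the fibre data of the $\mathbf{M}$-directions into a $Z$-family: the $\cO_Z$-coefficients (in particular the odd ones coming from $U$) supply exactly the odd section components on the right, and what remains is a section along $g_0$ of the pull-back of $\widetilde V$. This yields the desired bijection, whose inverse is obtained by reading the same regrouping backwards, the two maps being manifestly mutually inverse.

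Finally I would check naturality in $\mathbf{Z}$ (and in $\mathbf{M},\mathbf{N}$): the exponential law, the pull-back of $\widetilde V$, and the isomorphisms of Propositions \ref{k-identification} and \ref{bundle-identification} are all functorial, and the non-canonical choices of connections enter only through the nilpotent automorphism $\Psi$ of Proposition \ref{explicit-isomorphism2}, which does not alter the underlying correspondence of points. I expect the \emph{main obstacle} to be precisely the matching of the two section descriptions across the decomposition $\cO_Z\mathbin{\hat\otimes}\cO_M$ while verifying that the resulting map is not merely a bijection of sets but a morphism (indeed a diffeomorphism) of infinite-dimensional super objects in the enlarged category: this requires Michor's exponential law at the level of the convenient-smooth structures, together with the local triviality of Proposition \ref{infinite vector bundle}, in order to transport the fibre data smoothly from chart to chart and to see that the regrouping is compatible with the transition functions built from parallel transport.
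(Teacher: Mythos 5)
Your proposal is correct and follows essentially the same route as the paper: Batchelor's theorem applied to $\mathbf{Z}$, Michor's exponential law $\Hom(Z\times M,N)=\Hom(Z,\Hom(M,N))$ for the base maps, the section description of Proposition \ref{space_of_fields} applied on both sides (to $\Pi(\mathrm{pr}_1^*L\oplus \mathrm{pr}_2^*W)$ over $Z\times M$ on the left, and fibrewise over $\Hom(M,N)$ via the local model $\mathrm{pr}_2^*E(f)$ on the right), followed by the regrouping of symmetric-algebra tensor factors to match the two bundles of sections. Your added attention to the identification of $\mathrm{pr}_2^*E(f)$ with $G^*(TN)$ and to smoothness across charts is exactly the point the paper isolates in its Addendum, so nothing in your plan deviates from the published argument.
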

\begin{proof}Let's denote with $\Pi L, \Pi W,\Pi V$ three vector-bundles with bases respectively the smooth manifolds $Z,M,N$, diffeomorphic to $\mathbf{Z},\mathbf{M},\mathbf{N}$. For the right-hand side of the previous equality let's choose a map $F\in\Hom({Z},\Hom(M,N))$; such a map locally taking value in a generic open set $U_{f}\subset \Hom(M,N)$, consists of a section of the following bundle
\beq
    \xymatrix{  pr_{2}^{*}E(f)\ar[d]  \\
    Z \times M  \\
               }
  \eeq
  with $pr_{2}:Z \times M \to M$ the projection in the second element. By the use of Theorem $\ref{Mic2}$ the tangent bundle to $\Hom(M,N)$ is identified with the sections with compact support of the pullback tangent bundle to $N$. Applying the same arguments used in the proof of Proposition \ref{result1} we get a one to one correspondence between
  $\Hom(\mathbf{Z},\uHom(\mathbf{M},\mathbf{N}))$
and sections of
\beq\label{rhs}
\xymatrix
{{\{S_{+}^{(q)} (pr_{1}^* L^*[1])\otimes pr_{2}^* (E(f))\oplus S^{(q)} (pr_{1}^* L^*[1])\otimes pr_2^*(\tilde{V}_{f}[1])\}_{f\in \Hom}}\ar[d]\\
Z\times M\\
}
\eeq
with $q$ the rank of $\Pi L$.
For left-hand side of the equality, let's choose a map $G\in\Hom(Z\times M,N)$ which corresponds to $F$ for the categorical property $\Hom({{Z}\times M,N})=\Hom({Z},\Hom(M,N))$ in $\mathrm{\bf Man}$.
Due to Proposition $\ref{result1}$, the sections of
$\Hom({\mathbf{Z}\times \mathbf{M},\mathbf{N}})$ are in one to one correspondence with the sections of
\beq
\label{lhs}
\begin{array}{c}{S_{+}^{(q)}\!(pr_{1}^* L[1])\!\otimes\! S^{(k)}_{+}\!(pr_{2}^*W[1])\!\otimes\! G^*(TN)\!\oplus\! S^{(q)}\!(pr_{1}^* L[1])\!\otimes\! S^{(k)}\!(pr_{2}^*W[1])\!\otimes\! G^*({V}[1])} \\ \downarrow\\
 Z\times M .\\
\end{array}
\eeq
 The sections of $(\ref{lhs})$ and $(\ref{rhs})$ coincide by construction.
\vskip 0.5cm
{\bf Addendum}. Actually what is crucial in this identification is the fact that the space of sections of $\{pr_{2}^{*}E(f)\}_{f\in \Hom(M,N)}\to Z\times M$ for a given $F\in\Hom({Z},\Hom(M,N))$ coincides with the space of sections of $G^*(TN)\to Z\times M$, with $F,G$ related by the mentioned categorical property. $\square$

\end{proof}

\section {Appendix}

In the category ${\bf SMan}$ the finite product ($\times$) is defined in the following way:
given $\mathcal{M}=(M,\mathcal{O}_{M}), \mathcal{N}=(N,\mathcal{O}_{N})\in {\bf SMan}$, $\mathcal{M}\times  \mathcal{N}$ is the graded-manifold obtained attaching to each rectangular open set $U\times V\subset M\times N$ the super-algebra $\mathcal{O}_{M\times N}(U\times V) :=  \mathcal{O}_{M}(U)\widehat{\otimes} \mathcal{O}_{N}(V)$, where $\mathcal{O}_{M}(U)\widehat{\otimes}\mathcal{O}_{N}(V)$ is the completion\footnote{in the projective tensor topology} of $\mathcal{O}_{M_{0}}(U){\otimes} \mathcal{O}_{N_{0}}(V)$.
In particular ${\bf SMan}$ is a monoidal category, with tensor product equal to the coproduct of ringed spaces (i.e. $\otimes:=\times$) and the identity given by the terminal object $\mathbb{R}^{0|0}$ in ${\bf SMan}$, i.e. the locally ringed space with the singleton $\{\ast\}$ as topological space and $\mathbb{R}$ as sheaf.
In every closed monoidal category the internal $\Hom$, for any two objects $\mathcal{N},\mathcal{M}$, is equipped with the canonical morphism (evaluation)
$$ev_{\mathcal{N},\mathcal{M}}:\uHom(\mathcal{N},\mathcal{M})\otimes \mathcal{N}\rightarrow \mathcal{M},$$
which is universal (for every object $X$ and morphism $f: X\otimes \mathcal{N} \rightarrow \mathcal{M}$, there exist a unique morphism $h:X\rightarrow \uHom(\mathcal{N},\mathcal{M})$, such that $f=ev_{\mathcal{N},\mathcal{M}}\circ (h\otimes id_{\mathcal{N}})$). The introduction of the morphism $ev$ can be easily understood by the use of the definition of $\uHom$ applied to
\beq
\Hom(\uHom(\cN,\cM)\otimes \cN,\cM)=\Hom(\uHom(\cN,\cM),\uHom(\cN,\cM))
\eeq
and then considering on the right hand side the identity morphism $id_{\uHom(\cN,\cM)}$.
Other useful properties of internal $\mathrm{Hom}$-functor of a closed monoidal category are
the isomorphism $$\uHom(\mathcal{M} \otimes \mathcal{N},\mathcal{G})\simeq \uHom(\mathcal{M},\uHom(\mathcal{N},\mathcal{G}));$$
the composition of internal homomorphism $$\uHom(\mathcal{M},\mathcal{N})\otimes \uHom(\mathcal{N},\mathcal{K})\rightarrow \uHom(\mathcal{M},\mathcal{K}),$$
and the product of internal homomorphisms by identities $$\uHom(\mathcal{M},\mathcal{N})\rightarrow \uHom(\mathcal{K}\otimes \mathcal{M},\mathcal{K} \otimes \mathcal{N}).$$

Giuseppe Bonavolont\`a,\\
Campus Kirchberg, Mathematics Research Unit\\ 6, rue R. Coudenhove-Kalergi, L-1359 Luxembourg
City
\vskip 0.5cm
Alexei Kotov,\\
Institutt for matematikk og statistikk\\
Fakultet for naturvitenskap og teknologi
Universitetet i Troms{\o}
N-9037 Troms{\o}

\end{document}